\documentclass[11pt, letterpaper]{amsart}
\usepackage{fullpage,amsfonts,amsmath,amssymb,amsthm,tikz}
\usetikzlibrary{matrix}
\usepackage{bm}
\usepackage[vcentermath]{youngtab}
\usepackage{young}
\usepackage[all,cmtip]{xy}
\usepackage{hyperref}
\newtheorem{thm}{Theorem}[section]
\newtheorem{cor}[thm]{Corollary}
\newtheorem{lem}[thm]{Lemma}
\newtheorem{prop}[thm]{Proposition}

\theoremstyle{definition}
\newtheorem{definition}{Definition}[section]
\newtheorem{example}{Example}[section]
\newtheorem*{remark}{Remark}

%Special Letters

\newcommand{\CC}{\mathbb{C}}
\newcommand{\FF}{\mathbb{F}}
\newcommand{\KK}{\mathbb{K}}

\newcommand{\cB}{\mathcal{B}}

%Math Roman

\newcommand{\Ind}{\mathrm{Ind}}
\newcommand{\GL}{\mathrm{GL}}

\newcommand{\Res}{\mathrm{Res}}

\newcommand{\UT}{\mathrm{UT}}

%Size

%Var Variables

%Miscellaneous

\newcommand{\One}{{1\hspace{-.14cm} 1}}

\def\adots{\mathinner{\mkern2mu\raise0pt\hbox{.}  % antidiagonal dots
\mkern2mu\raise4pt\hbox{.}\mkern1mu
\raise7pt\vbox{\kern7pt\hbox{.}}\mkern1mu}}

%New

\usetikzlibrary[positioning,patterns]

\tikzstyle{bsq}=[rectangle, draw, thick, minimum width=.75cm, minimum height=.75cm]

\setlength\parindent{0pt}

\begin{document}
\title{The irreducible unipotent modules of the finite general linear groups via tableaux}
\author{Scott Andrews}
\address{Department of Mathematics \\ Boise State University \\ Boise, ID 83725}
\email{scottandrews@boisestate.edu}
%\urladdr{https://math.dartmouth.edu/~andrews/}
\keywords{finite general linear group, unipotent representation, tableaux}
\subjclass[2010]{05E05,05E10,20C20,20C33}

\begin{abstract} We construct the irreducible unipotent modules of the finite general linear groups using tableaux. Our construction is analogous to that of James (1976) for the symmetric groups, answering an open question as to whether such a construction exists. Our modules are defined over any field containing a nontrivial $p^\text{th}$ root of unity (where $p$ is the defining characteristic of the group). We show that our modules are isomorphic to those constructed by James (1984), although the two constructions utilize different approaches. Finally we look closer at the complex irreducible unipotent modules, providing motivation for our construction in the language of symmetric functions.
\end{abstract}
\maketitle

The standard construction of the irreducible representations of the symmetric group $S_n$ (initially due to James \cite{MR0417272}) uses the action of $S_n$ on Young tableaux to define, for each integer partition of $n$, a ``Specht module.'' In characteristic $0$, the Specht modules are (up to isomorphism) all of the irreducible $S_n$-modules; in other characteristics, the irreducible modules appear as quotients of the Specht modules. James shows in \cite{MR0439924} that the Specht module corresponding to the partition $\lambda$ has a basis indexed by the standard Young tableaux of shape $\lambda$.

\bigbreak

There is a principal in representation theory that information about the finite general linear group $GL_n(\FF_q)$ can be related to that of $S_n$ by ``setting $q=1$.'' In particular, there is a collection of irreducible representations of $\GL_n(\FF_q)$, known as ``unipotent representations,'' that one would expect to behave like the irreducible representations of $S_n$. In \cite{MR776229}, James constructs the unipotent modules of $\GL_n(\FF_q)$ over any field containing a nontrivial $p^\text{th}$ root of unity (where $p$ is the characteristic of $\FF_q$). The construction is quite different from the tableaux approach for the symmetric group, and in particular (as James notes in the introduction of \cite{MR776229}) the proofs do not translate to proofs for the symmetric group.

\bigbreak

In James' construction for the finite general linear groups, a collection of modules (which James also calls Specht modules) play a similar role to that of the Specht modules of the symmetric group. It is still an open problem to determine a basis for these modules in the finite general linear group case (see, for instance, \cite{MR2047443}). In providing a new construction of the irreducible unipotent modules, we hope to shed some light on this question.

\bigbreak

The key ingredient in James' construction for $S_n$ is the interaction between the two linear representations of $S_n$, the trivial representation $\One$ and the sign representation $\epsilon$. In particular, over a field of characteristic $0$, if $W_\lambda$ is a Young subgroup of shape $\lambda$ and $W_{\lambda'}$ is a Young subgroup of shape $\lambda'$, we have that
\[
        \langle \Ind_{W_\lambda}^{S_n}(\One), \Ind_{W_{\lambda'}}^{S_n}(\epsilon) \rangle = 1;
\]
the common irreducible component is the Specht module indexed by $\lambda$. The Specht module is constructed as an irreducible submodule of $\Ind_{W_\lambda}^{S_n}(\One)$ that contains a one-dimensional subspace on which $W_{\lambda'}$ acts as $\epsilon$.

\bigbreak

For the finite general linear group, the natural analogue to $\Ind_{W_\lambda}^{S_n}(\One)$ is $\Ind_{P_{\lambda'}}^{\GL_n(\FF_q)}(\One)$, where $P_{\lambda'}$ is the parabolic subgroup of $\GL_n(\FF_q)$ of shape $\lambda'$ (see Section ~\ref{preliminaries}). There are two reasonable analogues to $\Ind_{W_{\lambda'}}^{S_n}(\epsilon)$, however; these are the ``degenerate Gelfand--Graev characters'' of Zelevinsky \cite{MR643482} and the ``generalized Gelfand--Graev characters,'' which were initially constructed by Kawanaka \cite{MR803335} and recently studied by Thiem and the author \cite{andrewsthiem}. James' construction in \cite{MR776229} uses the degenerate Gelfand--Graev characters; we instead utilize the generalized Gelfand--Graev characters.

\bigbreak

Our approach is to label the boxes of Young diagrams with elements of $\FF_q^n$ rather than by integers. There is a natural action of $\GL_n(\FF_q)$ on these objects; we use this action to construct the irreducible unipotent modules. In Section~\ref{preliminaries}, we cover necessary background material on partitions and the finite general linear groups. Our construction is in Section~\ref{construction}, and in Section~\ref{complex} we look at the particular case where the ground field is $\CC$ and provide motivation for our construction.

\section{Preliminaries}\label{preliminaries}
\subsection{Partitions and tableaux}

Let $n$ be a positive integer; a \emph{partition} of $n$ is a sequence $\lambda = (\lambda_1,\lambda_2,\hdots,\lambda_k)$ of positive integers with $\lambda_1 \geq \lambda_2 \geq \hdots \geq \lambda_k$ and $\lambda_1+\lambda_2+\hdots +\lambda_k = n$. We write $\lambda \vdash n$ to indicate that $\lambda$ is a partition of $n$.

\bigbreak

There is a partial order on the set of partitions of $n$ with $\lambda \succeq \mu$ if and only if
\[
        \sum_{i=1}^k\lambda_i \geq \sum_{i=1}^k\mu_i
\]
for all $k$ (setting $\lambda_i=0$ if $\lambda$ has fewer than $i$ parts). This order is called the \emph{dominance order} on partitions.

\bigbreak

To each partition $\lambda$ we associate a \emph{Young diagram}, which is a left-justified array of blocks such that the number of blocks in the $i$th row is $\lambda_i$.

\begin{example} Let $\lambda = (4,3,1,1)$; then the Young diagram of shape $\lambda$ is
\[
		\begin{tikzpicture}[node distance=0 cm,outer sep = 0pt, baseline={([yshift=-.5ex]current bounding box.center)}]
	      \node[bsq] (11) at (   0,  0) {};
	      \node[bsq] (21) [below = of 11] {};
	      \node[bsq] (31) [below = of 21] {};
	      \node[bsq] (41) [below = of 31] {};
	      \node[bsq] (12) [right = of 11] {};
	      \node[bsq] (22) [below = of 12] {};
	      \node[bsq] (23) [right = of 22] {};
         	 \node[bsq] (13) [right = of 12] {};
          \node[bsq] (14) [right = of 13] {};
\end{tikzpicture}.
\]
\end{example}

The \emph{conjugate} of a partition $\lambda$, denoted $\lambda'$, is the partition defined by $\lambda_i' = |\{j \mid \lambda_j \geq i\}|$. Note that the Young diagram of $\lambda'$ is obtained from that of $\lambda$ by reflection about the diagonal.

\bigbreak

If $\lambda$ is a partition of $n$, a \emph{tableau} of shape $\lambda$ is a filling of the Young diagram of shape $\lambda$ by the integers from $1$ through $n$, each appearing exactly once. We say that a tableau is \emph{standard} if the entries increase along rows and columns.

\begin{example}
Let $\lambda = (4,3,1,1)$, and let 
\[
		T = \begin{tikzpicture}[node distance=0 cm,outer sep = 0pt, baseline={([yshift=-.5ex]current bounding box.center)}]
	      \node[bsq] (11) at (   0,  0) {$1$};
	      \node[bsq] (21) [below = of 11] {$3$};
	      \node[bsq] (31) [below = of 21] {$7$};
	      \node[bsq] (41) [below = of 31] {$9$};
	      \node[bsq] (12) [right = of 11] {$2$};
	      \node[bsq] (22) [below = of 12] {$4$};
	      \node[bsq] (23) [right = of 22] {$6$};
         	 \node[bsq] (13) [right = of 12] {$5$};
          \node[bsq] (14) [right = of 13] {$8$};
\end{tikzpicture} \quad \text{and} \quad
T' = \begin{tikzpicture}[node distance=0 cm,outer sep = 0pt, baseline={([yshift=-.5ex]current bounding box.center)}]
	      \node[bsq] (11) at (   0,  0) {$1$};
	      \node[bsq] (21) [below = of 11] {$4$};
	      \node[bsq] (31) [below = of 21] {$9$};
	      \node[bsq] (41) [below = of 31] {$6$};
	      \node[bsq] (12) [right = of 11] {$2$};
	      \node[bsq] (22) [below = of 12] {$8$};
	      \node[bsq] (23) [right = of 22] {$7$};
         	 \node[bsq] (13) [right = of 12] {$3$};
          \node[bsq] (14) [right = of 13] {$5$};
\end{tikzpicture};
\]
then $T$ and $T'$ are both tableaux of shape $\lambda$, but $T'$ is not standard as the pairs $(7,8)$ and $(6,9)$ violate the row-increasing and column-increasing conditions.
\end{example}

\subsection{The finite general linear groups}

Let $q$ be a power of a prime, and let $\FF_q$ be the finite field with $q$ elements. We are interested in $G = \GL_n(\FF_q)$, the group of invertible $n \times n$ matrices with entries in $\FF_q$.

\bigbreak

Let $\lambda$ be a partition of $n$, and let $T$ be the row-reading tableau of shape $\lambda$. We define
\begin{align*}
        P_\lambda &= \{g \in G \mid g_{ij} = 0 \text{ if } i \text{ is strictly below } j \text{ in } T\} \text{ and} \\ 
        U_\lambda & = \{g \in G \mid g_{ii} = 1 \text{ and } g_{ij} = 0 \text{ unless } i = j \text{ or }i \text{ is strictly above } j \text{ in } T\}.
\end{align*}

\begin{example} Let $\lambda = (4,3,1,1)$; then
\[
		T = \begin{tikzpicture}[node distance=0 cm,outer sep = 0pt, baseline={([yshift=-.5ex]current bounding box.center)}]
	      \node[bsq] (11) at (   0,  0) {$1$};
	      \node[bsq] (21) [below = of 11] {$5$};
	      \node[bsq] (31) [below = of 21] {$8$};
	      \node[bsq] (41) [below = of 31] {$9$};
	      \node[bsq] (12) [right = of 11] {$2$};
	      \node[bsq] (22) [below = of 12] {$6$};
	      \node[bsq] (23) [right = of 22] {$7$};
         	 \node[bsq] (13) [right = of 12] {$3$};
          \node[bsq] (14) [right = of 13] {$4$};
\end{tikzpicture},
\]
and we have
\[
		P_\lambda = \left\{\left(\begin{array}{ccccccccc}
		* & * & * & * & * & * & * & * & * \\
		* & * & * & * & * & * & * & * & * \\
		* & * & * & * & * & * & * & * & * \\
		* & * & * & * & * & * & * & * & * \\
		0 & 0 & 0 & 0 & * & * & * & * & * \\
		0 & 0 & 0 & 0 & * & * & * & * & * \\
		0 & 0 & 0 & 0 & * & * & * & * & * \\
		0 & 0 & 0 & 0 & 0 & 0 & 0 & * & * \\
		0 & 0 & 0 & 0 & 0 & 0 & 0 & 0 & * \end{array}\right)\in \GL_n(\FF_q)\right\}
\]
and
\[
		U_\lambda = \left\{\left(\begin{array}{ccccccccc}
		1 & 0 & 0 & 0 & * & * & * & * & * \\
		0 & 1 & 0 & 0 & * & * & * & * & * \\
		0 & 0 & 1 & 0 & * & * & * & * & * \\
		0 & 0 & 0 & 1 & * & * & * & * & * \\
		0 & 0 & 0 & 0 & 1 & 0 & 0 & * & * \\
		0 & 0 & 0 & 0 & 0 & 1 & 0 & * & * \\
		0 & 0 & 0 & 0 & 0 & 0 & 1 & * & * \\
		0 & 0 & 0 & 0 & 0 & 0 & 0 & 1 & * \\
		0 & 0 & 0 & 0 & 0 & 0 & 0 & 0 & 1 \end{array}\right)\in \GL_n(\FF_q)\right\}.
\]
We also define
\[
        P_\lambda^- = (P_\lambda)^t \quad \text{and} \quad U_\lambda^- = (U_\lambda)^t
\]
to be the transposes of $P_\lambda$ and $U_\lambda$.
\end{example}

\begin{remark} The groups $P_\lambda$ and $P_\lambda^-$ are parabolic subgroups of $G$, with unipotent radicals $U_\lambda$ and $U_\lambda^-$. In particular, the group of upper triangular matrices in $G$ given by 
\[
       B_n(\FF_q) = P_{(1^n)}
\]
is a Borel subgroup of $G$ with unipotent radical
\[
        \UT_n(\FF_q) = U_{(1^n)}.
\]
\end{remark}

We say that an irreducible module of $G$ (over some field) is \emph{unipotent} if it is a composition factor of $\Ind_{B_n(\FF_q)}^G(\One)$.

\section{The irreducible unipotent modules of $\GL_n(\FF_q)$}\label{construction}

The construction in this section is motivated by the construction of the irreducible representations of the symmetric group (see \cite{MR0417272,MR0439924,MR1824028}). Many of the results are similar to those found in \cite{MR818927}, which is not surprising as our construction produces isomorphic modules.

\bigbreak

Let $p$ be the characteristic of $\FF_q$, and let $\KK$ be a field that contains a nontrivial $p^\text{th}$ root of unity (in particular, the characteristic of $\KK$ cannot be $p$). For the remained of the paper, fix a nontrivial homomorphism $\theta:\FF_q^+ \to \KK^\times$.

\begin{definition} Let $\lambda$ be a partition of $n$, and let $T$ be a filling of the Young diagram of shape $\lambda$ with linearly independent elements of $\mathbb{F}_q^n$. We call $T$ an $\mathbb{F}_q^n$\emph{-tableau}.
\end{definition}

Note that $G$ acts on the set of $\mathbb{F}_q^n$-tableaux by left multiplication of the entries (considered as column vectors). If $T$ is an $\mathbb{F}_q^n$-tableau of shape $\lambda$, we obtain an ordered basis $\mathcal{B}(T)$ of $\mathbb{F}_q^n$ by numbering the entries of $T$ from top to bottom, then left to right.

\begin{example} Let $\lambda = (4,2^2,1)$; then the ordered basis $\mathcal{B}(T) = \{v_1,...,v_9\}$ corresponds to the tableau
\[
		T = \begin{tikzpicture}[node distance=0 cm,outer sep = 0pt, baseline={([yshift=-.5ex]current bounding box.center)}]
	      \node[bsq] (11) at (   0,  0) {$v_1$};
	      \node[bsq] (21) [below = of 11] {$v_2$};
	      \node[bsq] (31) [below = of 21] {$v_3$};
	      \node[bsq] (41) [below = of 31] {$v_4$};
	      \node[bsq] (12) [right = of 11] {$v_5$};
	      \node[bsq] (22) [below = of 12] {$v_6$};
	      \node[bsq] (32) [below = of 22] {$v_7$};
         	 \node[bsq] (13) [right = of 12] {$v_8$};
          \node[bsq] (14) [right = of 13] {$v_9$};
\end{tikzpicture} .
\]
\end{example}

To each $\FF_q^n$-tableaux $T$ we associate two subgroups of $G$, given by
\begin{align*}
		U(T) &= \{g \in G \mid g \cdot v_i -v_i \in \mathbb{F}_q\text{-span}\{v_j \mid v_j \text{ is strictly left of }v_i
		\text{ in }T\}\text{ for all }i\} \text{ and} \\
		P(T) &= \{g \in G \mid g \cdot v_i \in \mathbb{F}_q\text{-span}\{v_j \mid v_j \text{ is nonstrictly right of }v_i
		\text{ in }T\}\text{ for all }i\}.
\end{align*}

We remark that if $\mathcal{B}(T)$ is the standard ordered basis of $\mathbb{F}_q^n$, then $U(T) = U_{\lambda'}$ and $P(T) = P_{\lambda'}^-$.

\begin{example}\label{example1} Let $\lambda = (4,2^2,1)$, and let $\mathcal{B}(T)$ be the standard ordered basis. Then
\[
		T = \begin{tikzpicture}[node distance=0 cm,outer sep = 0pt, baseline={([yshift=-.5ex]current bounding box.center)}]
	      \node[bsq] (11) at (   0,  0) {$v_1$};
	      \node[bsq] (21) [below = of 11] {$v_2$};
	      \node[bsq] (31) [below = of 21] {$v_3$};
	      \node[bsq] (41) [below = of 31] {$v_4$};
	      \node[bsq] (12) [right = of 11] {$v_5$};
	      \node[bsq] (22) [below = of 12] {$v_6$};
	      \node[bsq] (32) [below = of 22] {$v_7$};
         	 \node[bsq] (13) [right = of 12] {$v_8$};
          \node[bsq] (14) [right = of 13] {$v_9$};
\end{tikzpicture} ,
\]
and we have
\[
		U(T) = \left\{\left(\begin{array}{ccccccccc}
		1 & 0 & 0 & 0 & * & * & * & * & * \\
		0 & 1 & 0 & 0 & * & * & * & * & * \\
		0 & 0 & 1 & 0 & * & * & * & * & * \\
		0 & 0 & 0 & 1 & * & * & * & * & * \\
		0 & 0 & 0 & 0 & 1 & 0 & 0 & * & * \\
		0 & 0 & 0 & 0 & 0 & 1 & 0 & * & * \\
		0 & 0 & 0 & 0 & 0 & 0 & 1 & * & * \\
		0 & 0 & 0 & 0 & 0 & 0 & 0 & 1 & * \\
		0 & 0 & 0 & 0 & 0 & 0 & 0 & 0 & 1 \end{array}\right)\right\}
\]
and
\[
		P(T) = \left\{\left(\begin{array}{ccccccccc}
		* & * & * & * & 0 & 0 & 0 & 0 & 0 \\
		* & * & * & * & 0 & 0 & 0 & 0 & 0 \\
		* & * & * & * & 0 & 0 & 0 & 0 & 0 \\
		* & * & * & * & 0 & 0 & 0 & 0 & 0 \\
		* & * & * & * & * & * & * & 0 & 0 \\
		* & * & * & * & * & * & * & 0 & 0 \\
		* & * & * & * & * & * & * & 0 & 0 \\
		* & * & * & * & * & * & * & * & 0 \\
		* & * & * & * & * & * & * & * & * \end{array}\right)\right\}.
\]

\end{example}

If $T$ is the $\FF_q^n$-tableau corresponding to the ordered basis $\cB(T)=\{v_1,v_2,\hdots,v_n\}$, let $X(T)$ be the set of pairs $(i,j)$ such that $v_i$ lies in the box directly to the left of $v_j$ in $T$. In Example~\ref{example1},
\[
        X(T) = \{(1,5),(2,6),(3,7),(5,8),(8,9)\}.
\] 
Define a linear character $\psi_T$ of $U(T)$ by
\[
		\psi_T(u) = \theta\left(\sum_{(i,j) \in X(T)} \text{the coefficient of } v_i \text{ in }uv_j \right),
\]
where $\theta$ is the fixed nontrivial homomorphism from $\FF_q^+$ to $\KK^\times$.

\begin{remark} The groups $P(T)$ and $U(T)$ are analogous to the row-stabilizer $R_T$ and the column-stabilizer $C_T$ of the symmetric group; the linear character $\psi_T$ is analogous to the sign character (see \cite{MR0417272,MR0439924,MR1824028}). 

\end{remark}

Consider the permutation $G$-module
\[
		\KK\text{-span}\{T \mid T \text{ is an } \FF_q^n\text{-tableau of shape }\lambda\};
\]
this module is isomorphic to the left regular module of $G$. We define
\begin{align*}
		m_T &= \sum_{p \in P(T)} pT \quad \text{and} \\
		e_T &= \sum_{u \in U(T)} \psi_T(u^{-1})m_{uT}.
\end{align*}

The following proposition is easy to verify directly.

\begin{lem}\label{properties} Let $T$ be any $\FF_q^n$-tableau.
\begin{enumerate}
\item For all $g \in G$, we have $U(gT) = gU(T)g^{-1}$ and $P(gT) = gP(T)g^{-1}$.
\item For all $g \in G$, we have $g \cdot m_T = m_{g \cdot T}$ and $g \cdot e_T = e_{gT}$.
\item For all $u \in U(T)$ and $g \in G$, we have $\psi_{gT}(gug^{-1}) = \psi_T(u)$.
\item For all $p \in P(T)$, we have $m_{pT} = m_T$.
\item For all $u \in U(T)$, we have $e_{uT} = \psi_T(u)e_T$.
\end{enumerate}
\end{lem}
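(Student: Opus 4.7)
The plan is to take the five statements in order, since each later one reduces to an earlier one plus a short indexing argument; no real obstacle is anticipated, only bookkeeping about how the definitions of $U(T)$, $P(T)$, and $\psi_T$ transform under the $G$-action.

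For (1), I would unpack the definitions directly: $h \in U(gT)$ precisely when $h(gv_i) - gv_i$ lies in the $\FF_q$-span of the $gv_j$ that are strictly left of $gv_i$ in $gT$. Pulling $g$ out on the left and using that the linear map $g$ carries $\cB(T)$ bijectively to $\cB(gT)$, this is exactly the condition $g^{-1}hg \in U(T)$. The argument for $P(gT) = gP(T)g^{-1}$ is verbatim the same, with ``strictly left of'' replaced by ``non-strictly right of.'' Statement (2) is then an immediate reindexing: writing $gp = (gpg^{-1})g$ inside $m_T = \sum_{p \in P(T)} pT$ turns $g \cdot m_T$ into a sum over $P(gT)$ applied to $gT$, and the $e_T$ case is the same reindexing together with (3), so that the character values on $U(T)$ and $U(gT)$ line up.

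Part (3) is the only genuine computation, and I would write this one out most carefully. Two observations make it go through: $X(T)$ depends only on which boxes are adjacent in the shape, not on the vectors $v_i$, so $X(gT) = X(T)$; and because $g$ is linear and takes the ordered basis $\cB(T)$ to $\cB(gT)$, the coefficient of $gv_i$ in $(gug^{-1})(gv_j) = g(uv_j)$ equals the coefficient of $v_i$ in $uv_j$. Applying $\theta$ to the resulting sum over $X(T)$ then gives $\psi_{gT}(gug^{-1}) = \psi_T(u)$ directly.

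Parts (4) and (5) are pure group-theoretic reindexings. For (4), combining (2) with the observation that left multiplication by $p \in P(T)$ permutes $P(T)$ yields $p \cdot m_T = m_T$, hence $m_{pT} = m_T$. For (5), I would use (1) and (3) to rewrite $e_{vT}$ as a sum over $U(T)$; the substitution $w = vu$ (which again just permutes $U(T)$), together with multiplicativity of the linear character $\psi_T$, pulls out a factor of $\psi_T(v)$ and gives $e_{vT} = \psi_T(v) e_T$. The only subtlety anywhere is keeping straight whether one is summing over $U(T)$ or $U(gT)$ at a given step, and once (1) and (3) are in hand this never causes trouble.
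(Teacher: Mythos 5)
Your verification is correct and is exactly the direct, definition-unpacking argument the paper has in mind (the paper simply states that the lemma "is easy to verify directly" and gives no written proof). All five parts check out, including the key computation in (3) that $X(gT)=X(T)$ and that coefficients are preserved under the change of basis $\cB(T)\mapsto\cB(gT)$; the only thing to watch in writing up (5) is the direction of the reindexing substitution ($w=uv$ if you expand $e_{vT}$ via (1) and (3), $w=vu$ if you instead start from $e_{vT}=v\cdot e_T$), which is a purely cosmetic point.
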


Let
\[
		M^\lambda = \KK\text{-span}\{m_T \mid T \text{ is an }\FF_q^n\text{-tableau of shape }\lambda\}
\]
and
\[
		S^\lambda = \KK\text{-span}\{e_T \mid T \text{ is an }\FF_q^n\text{-tableau of shape }\lambda\};
\]
by part (2) of Lemma~\ref{properties}, $M^\lambda$ and $S^\lambda$ are both $G$-modules.

\begin{remark}
The module $M^\lambda$ is isomorphic to the permutation representation of $G$ on the set of $\lambda$-flags. This means that our module $M^\lambda$ is isomorphic to the module $M_{\lambda'}$ of James (as in \cite[10.1]{MR776229}).
\end{remark}

\begin{lem}
We have that
\[
        M^\lambda \cong \Ind_{P_{\lambda'}^-}^G(\One).
\]
\end{lem}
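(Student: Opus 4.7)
The plan is to realize $M^\lambda$ as the permutation module on the $G$-set $G/P_{\lambda'}^-$. Let $T_0$ denote the $\FF_q^n$-tableau of shape $\lambda$ whose associated ordered basis $\cB(T_0)$ is the standard basis $\{e_1,\ldots,e_n\}$; by the remark just before Example~\ref{example1} we have $P(T_0) = P_{\lambda'}^-$. Since any two $\FF_q^n$-tableaux of shape $\lambda$ are determined by an ordered $n$-tuple of linearly independent vectors and $G$ acts transitively on such tuples, $G$ acts transitively on the set of $\FF_q^n$-tableaux of shape $\lambda$, and in particular on $\{m_T\}$ by Lemma~\ref{properties}(2).

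Next I would compute the stabilizer of $m_{T_0}$. If $g \in G$ satisfies $g \cdot m_{T_0} = m_{T_0}$, then $m_{gT_0} = m_{T_0}$, i.e.\
\[
        \{gp T_0 : p \in P(T_0)\} = \{pT_0 : p \in P(T_0)\}
\]
as multisets of $\FF_q^n$-tableaux. Because the entries of $T_0$ are linearly independent, the stabilizer in $G$ of the tableau $T_0$ itself is trivial, so the map $p \mapsto pT_0$ is injective. The above equality therefore forces $gP(T_0) = P(T_0)$, i.e.\ $g \in P(T_0) = P_{\lambda'}^-$. Conversely Lemma~\ref{properties}(4) gives $m_{pT_0} = m_{T_0}$ for all $p \in P(T_0)$. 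Hence the stabilizer is exactly $P_{\lambda'}^-$.

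The remaining (and only slightly delicate) point is linear independence: I must check that the distinct elements $m_T$ form a $\KK$-basis of $M^\lambda$, rather than just a spanning set. Writing an arbitrary tableau as $gT_0$, the support of $m_{gT_0}$ in the tableau basis of the ambient regular module is $\{gpT_0 : p \in P_{\lambda'}^-\}$. If $g^{-1}h \notin P_{\lambda'}^-$, then $gp_1 T_0 = hp_2 T_0$ would imply $g^{-1}h = p_1 p_2^{-1} \in P_{\lambda'}^-$ (again using triviality of the stabilizer of $T_0$), a contradiction. Thus the supports corresponding to distinct cosets $gP_{\lambda'}^- \in G/P_{\lambda'}^-$ are pairwise disjoint, so the corresponding $m_{gT_0}$ are linearly independent.

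Combining these steps, $M^\lambda$ has a $\KK$-basis indexed by $G/P_{\lambda'}^-$, on which $G$ acts by left translation; this is the defining description of $\Ind_{P_{\lambda'}^-}^G(\One)$, and the isomorphism is given explicitly by $m_{gT_0} \mapsto g \otimes 1$. The main (and essentially only) obstacle is the linear-independence step, which I expect to dispatch by the disjoint-support argument above.
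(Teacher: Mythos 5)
Your proof is correct and follows the same route as the paper, which simply asserts ``by construction'' that $M^\lambda \cong \Ind_{P(T)}^G(\One)$ and then takes $T$ to be the standard tableau so that $P(T)=P^-_{\lambda'}$. You have merely made the routine details explicit (simple transitivity of $G$ on $\FF_q^n$-tableaux, the stabilizer of $m_{T_0}$ being $P_{\lambda'}^-$, and linear independence via disjoint supports), all of which are fine.
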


\begin{proof} By construction, we have that
\[
        M^\lambda \cong \Ind_{P(T)}^G(\One)
\]
for any $\FF_q^n$-tableaux $T$. Let $T$ correspond to the standard ordered basis of $\FF_q^n$; then $P(T) = P^-_{\lambda'}$.
\end{proof}

Let $W$ be the group of permutation matrices of $G$. The Bruhat decomposition allows us to write each element $g \in G$ in the form $g = uwb$, where $u \in UT_n(\FF_q)$, $w \in W$, and $b \in B^-_n(\FF_q)$. For a partition $\lambda$, let $W_\lambda = W \cap P_\lambda$ be the Young subgroup of shape $\lambda$.

\begin{lem}\label{groupintersection} Suppose that $T$ and $T'$ are $\FF_q^n$-tableaux of shape $\lambda$; then there exist $u \in U(T)$ and $p \in P(T')$ with $uT = pT'$ if and only if $U(T) \cap P(T') = \{1\}$.
\end{lem}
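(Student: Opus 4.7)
My plan is to reduce both stated conditions to a single transversality condition on two flag filtrations associated to $T$ and $T'$. For each box $i$ of the Young diagram of $\lambda$, let $c(i)$ denote its column index (well-defined in both $T$ and $T'$ since they share the same shape), and set
\[
	F^c(T) := \FF_q\spanning\{v_i \mid c(i) \leq c\}, \qquad L^c(T') := \FF_q\spanning\{v'_j \mid c(j) \geq c\}.
\]
Directly from the definitions, $g \in U(T)$ iff $(g-1) F^c(T) \subseteq F^{c-1}(T)$ for every $c$, and $g \in P(T')$ iff $gL^c(T') \subseteq L^c(T')$ for every $c$. I will show both stated conditions are equivalent to the \emph{transversality} condition that $F^c(T) \cap L^{c+1}(T') = 0$ for every $c$ (equivalently, $F^c(T) + L^{c+1}(T') = \FF_q^n$, by a dimension count).

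Assuming transversality, I establish both of the lemma's stated conditions. For the first, given $g \in U(T) \cap P(T')$ and setting $N := g - 1$, suppose $N \neq 0$ and let $c^*$ be the smallest $c$ with $NF^c(T) \neq 0$. Picking $v \in F^{c^*}(T)$ with $Nv \neq 0$, the (shifted) transversality $\FF_q^n = F^{c^*-1}(T) + L^{c^*}(T')$ lets me write $v = v_F + v_L$ with $v_F \in F^{c^*-1}(T)$ and $v_L \in L^{c^*}(T')$; then $Nv = Nv_L$ lies in both $F^{c^*-1}(T)$ (from the $U(T)$ condition applied to $F^{c^*}$) and $L^{c^*}(T')$ (from the $P(T')$ condition applied to $L^{c^*}$), but $F^{c^*-1}(T) \cap L^{c^*}(T') = 0$ by transversality, a contradiction. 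For the second, transversality similarly gives $v_i \in F^{c(i)-1}(T) + L^{c(i)}(T')$, so I pick $w_i \in L^{c(i)}(T')$ with $w_i - v_i \in F^{c(i)-1}(T)$; a column-by-column induction shows $\{w_i\}$ is automatically a basis of $\FF_q^n$, and the linear extensions $u: v_i \mapsto w_i$ and $p: v'_i \mapsto w_i$ yield $u \in U(T)$, $p \in P(T')$ with $uT = pT'$.

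For the converse implications I argue by contrapositive. If transversality fails at some $c$, pick nonzero $v \in F^c(T) \cap L^{c+1}(T')$ and a nonzero linear functional $f$ vanishing on the proper subspace $F^c(T) + L^{c+2}(T')$; then the rank-one perturbation $g := 1 + f(\cdot)v$ is a nontrivial element of $U(T) \cap P(T')$. For the other direction, given $(u, p)$ with $uT = pT'$, setting $s_i := uv_i = pv'_i$ yields $v_i \in F^{c(i)-1}(T) + L^{c(i)}(T')$ for all $i$, hence $F^c(T) \subseteq F^{c-1}(T) + L^c(T')$ for all $c$. Fixing a basis of $\FF_q^n$ adapted to the bigrading of the two flags, with cell dimensions $d_{c, c'}$ satisfying $\sum_{c'} d_{c, c'} = \lambda'_c$ and $\sum_{c} d_{c, c'} = \lambda'_{c'}$, this condition forces $d_{c, c'} = 0$ for $c' < c$; an inductive argument starting from column 1 then uses the row and column sum constraints (together with $\lambda'_c > 0$ for each column $c$) to force $d_{c, c'} = \lambda'_c \delta_{c, c'}$, which directly implies transversality. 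The main technical obstacle is this last inductive argument, and ensuring the rank-one perturbation $g$ truly satisfies both subgroup conditions.
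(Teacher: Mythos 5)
Your proof is correct, but it takes a genuinely different route from the paper's. The paper reduces to the case where $T$ is the standard-basis tableau (so $U(T)=U_{\lambda'}$ and $P(T)=P_{\lambda'}^-$), writes $T'=gT$, and applies the Bruhat decomposition $g=vwb$, using that conjugation by $UT_n(\FF_q)$ preserves $U(T)$ and that $U(T)\cap wP(T)w^{-1}=\{1\}$ exactly when $w\in W_{\lambda'}$; both directions of the equivalence are then read off from membership of $g$ in the double coset $U(T)P(T)$. You instead work entirely with the two column flags $F^\bullet(T)$ and $L^\bullet(T')$ and show that each side of the equivalence is equivalent to the transversality condition $F^c(T)\cap L^{c+1}(T')=0$ for all $c$. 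This is more elementary (no Bruhat decomposition or Weyl-group combinatorics), isolates the geometric content, and all four implications check out: in particular your rank-one perturbation does lie in $U(T)\cap P(T')$, since $F^c(T)+L^{c+2}(T')$ is proper whenever transversality can actually fail (column $c+1$ is then nonempty), so the worry you flag there is unfounded. The only step resting on an unproved ingredient is ``existence $\Rightarrow$ transversality,'' where you invoke the standard fact that two filtrations of a finite-dimensional space admit a common adapted (bigraded) basis; this is true but deserves a line or a reference. It can also be bypassed: if $uT=pT'=T''$ with $u\in U(T)$ and $p\in P(T')$, then $u$ preserves each $F^c(T)$ and $p$ preserves each $L^c(T')$, so $F^c(T)=F^c(T'')$ and $L^{c+1}(T')=L^{c+1}(T'')$, and these intersect trivially because they are spanned by complementary subsets of the linearly independent entries of $T''$; this replaces the cell-dimension induction entirely. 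What the paper's approach buys is brevity given the structure theory of $\GL_n$ and a setup (Bruhat form, $W_{\lambda'}$) that is reused in the subsequent lemmas; what yours buys is a self-contained linear-algebra argument with a transparent criterion.
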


\begin{proof} By Lemma~\ref{properties}, we only need to consider the case where $T$ corresponds to the standard ordered basis. In this case, $U(T) = U_{\lambda'}$ and $P(T) = P_{\lambda'}^-$. Let $H = UT_n(\mathbb{F}_q)\cap P(T)$; then $H \cong UT_{\lambda_1'}(\mathbb{F}_q) \times \hdots \times UT_{\lambda'_k}(\mathbb{F}_q)$ and $UT_n(\FF_q) = U(T) \rtimes H$.

\bigbreak

Let $T' = gT$, and let $g = vwb$, with $v \in UT_n(\FF_q)$, $w \in W$, and $b \in B^-_n(\FF_q)$. Then
\[
		U(T) \cap P(T') = U(T) \cap gP(T)g^{-1} =U(T) \cap vwP(T)w^{-1}v^{-1}.
\]
As $UT_n(\FF_q)$ fixes $U(T)$ under conjugation, we have that $U(T) \cap vwP(T)w^{-1}v^{-1} = \{1\}$ if and only if $U(T) \cap w P(T)w^{-1} = \{1\}$; this occurs exactly when $w \in W_{\lambda'}$. We now have
\[
		g \in U(T)vwP(T) = U(T)vP(T) = U(T)P(T),
\]
as $W_{\lambda'}$ and $H$ are both contained in $P(T)$ and $v \in U(T) \rtimes H$. Write $g = u\tilde{p}$; then
\begin{align*}
		T' &= gT \\
		g\tilde{p}^{-1}g^{-1} T' &= g\tilde{p}^{-1}T \\
		g\tilde{p}^{-1}g^{-1} T' &= uT;
\end{align*}
set $p = g\tilde{p}^{-1}g^{-1}$. Conversely, suppose that $uT = pT'$, with $u \in U(T)$ and $p \in P(T')$; then $u = pg = g\tilde{p}$ for some $\tilde{p} \in P(T)$. It follows that $g \in U(T)P(T)$, hence $U(T) \cap P(T') = \{1\}$.
\end{proof}

\begin{lem}\label{nontrivialvalue} Suppose that $T$ and $T'$ are $\FF_q^n$-tableaux of shape $\lambda$ with $U(T)\cap P(T') \neq \{1\}$; then there exists $g \in U(T) \cap P(T')$ with $\psi_T(g) \neq 1$.
\end{lem}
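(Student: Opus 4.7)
The plan is to reduce to the standard tableau via Lemma~\ref{properties}, apply the Bruhat decomposition to rewrite $U(T)\cap P(T')$ as a conjugate of $U(T) \cap wP(T)w^{-1}$, and then exhibit an explicit $v$-conjugate of a root-subgroup element on which $\psi_T$ is nontrivial.

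First, by part (1) of Lemma~\ref{properties} I may assume $T$ corresponds to the standard ordered basis, so $U(T) = U_{\lambda'}$ and $P(T) = P_{\lambda'}^-$. Writing $T' = hT$ and decomposing $h = vwb$ as in the proof of Lemma~\ref{groupintersection}, the containment $b \in B_n^-(\FF_q) \subseteq P(T)$ gives $P(T') = vwP(T)w^{-1}v^{-1}$; since $v \in \UT_n(\FF_q)$ normalizes $U(T)$ I obtain $U(T) \cap P(T') = v(U(T) \cap wP(T)w^{-1})v^{-1}$, and the hypothesis then forces $w \notin W_{\lambda'}$.

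The core of the argument is a search for $(i,j) \in X(T)$ such that the root subgroup $\{I + tE_{ij} : t \in \FF_q\}$ lies in $U(T) \cap wP(T)w^{-1}$, after which I will set $g = v(I+tE_{ij})v^{-1}$. Membership in $U(T)$ is automatic from $(i,j) \in X(T)$, while the identity $w^{-1}(I+tE_{ij})w = I + tE_{w^{-1}(i),\,w^{-1}(j)}$ translates membership in $wP(T)w^{-1}$ into the requirement that the $\lambda'$-block of $w^{-1}(i)$ be $\geq$ the $\lambda'$-block of $w^{-1}(j)$. The existence of such $(i,j)$ follows from the contrapositive combinatorial claim: if every $(i,j) \in X(T)$ had the block of $w^{-1}(i)$ strictly less than that of $w^{-1}(j)$, then sending each cell $(r,c)$ of $\lambda$ to the block of $w^{-1}$ applied to the standard label of $(r,c)$ would produce a row-strict filling of $\lambda$ taking value $c$ exactly $\lambda_c'$ times, and a straightforward column-stripping induction (the top-right cell is forced to hold $\lambda_1$ by row-strictness, then peel it off and iterate) shows the unique such filling is the standard one $a(r,c)=c$, which forces $w$ to preserve every $\lambda'$-block.

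Having fixed $(i,j)$ and $g = v(I+tE_{ij})v^{-1}$, I would finish by expanding
\[
\psi_T(g) = \theta\Bigl(t \sum_{(a,b) \in X(T)} v_{ai}(v^{-1})_{jb}\Bigr).
\]
Upper-triangularity of $v$ and $v^{-1}$ restricts the sum to $a \leq i$ and $b \geq j$, and a direct case analysis on the columns of $a$ versus $i$ shows that the only pair in $X(T)$ meeting both inequalities is $(i,j)$ itself, contributing $v_{ii}(v^{-1})_{jj} = 1$. Thus $\psi_T(g) = \theta(t)$, and choosing $t$ outside $\ker\theta$ completes the proof. The main obstacle is the combinatorial uniqueness of the row-strict filling, though the column-stripping induction resolves it cleanly.
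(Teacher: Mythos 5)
Your proposal is correct and takes essentially the same route as the paper: reduce to the standard basis, use the Bruhat decomposition to write $U(T)\cap P(T') = v\bigl(U(T)\cap wP(T)w^{-1}\bigr)v^{-1}$ with $w \notin W_{\lambda'}$, find a pair $(i,j)\in X(T)$ whose root subgroup lies in $U(T)\cap wP(T)w^{-1}$, and take $g = v(1+tE_{ij})v^{-1}$. The only difference is that you spell out two steps the paper asserts without proof, namely the combinatorial existence of such a pair (via the uniqueness of the row-strict filling of shape $\lambda$ with content $\lambda'$) and the invariance $\psi_T\bigl(v(1+tE_{ij})v^{-1}\bigr) = \theta(t)$, both of which you verify correctly.
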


\begin{proof} Once again, it suffices to consider the case where $T$ is the tableau corresponding to the standard ordered basis. As $U(T)\cap P(T') \neq \{1\}$, we have
\[
        U(T)\cap P(T') = U(T)\cap vwP(T)w^{-1}v^{-1} = v(U(T)\cap wP(T)w^{-1})v^{-1}
\]
for some $w \in W- W_{\lambda'}$ and $v \in UT_n(\FF_q)$. As $w \notin W_{\lambda'}$, there must be at least one pair $(i,j) \in X(T)$ (that is, with $v_{i}$ directly left of $v_{j}$ in $T$), but with $v_{i}$ appearing nonstrictly right of $v_{j}$ in $wT$. Then $1+\alpha e_{ij} \in U(T)\cap wP(T)w^{-1}$ for all $\alpha \in \mathbb{F}_q$; as $\theta$ is nontrivial, for some $\alpha \in \mathbb{F}_q$, we have $\psi_T(1+\alpha e_{ij}) \neq 1$.

\bigbreak

By the construction of $\psi_T$, we have that $\psi_T(1+\alpha e_{ij}) = \psi_T(v(1+\alpha e_{ij})v^{-1})$ for all $v \in UT_n(\FF_q)$. Let $g = v(1+\alpha e_{ij})v^{-1}$.
\end{proof}

For an $\FF_q^n$-tableau $T$, define an element $k_T \in \KK G$ by
\[
		k_T = \sum_{u \in U(T)} \psi_T(u^{-1})u.
\]
The following two lemmas describe how $k_T$ acts on $m_{T'}$ for certain $\FF_q^n$-tableaux $T'$.
\begin{lem}\label{onedimension} Let $T$ and $T'$ be $\FF_q^n$-tableaux of the same shape; then $k_Tm_{T'} \in \KK e_T$.
\end{lem}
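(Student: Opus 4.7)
The plan is to split into two cases according to whether $U(T) \cap P(T') = \{1\}$, matching the dichotomy provided by Lemmas~\ref{groupintersection} and~\ref{nontrivialvalue}. First I would establish a preparatory identity: for every $g \in U(T)$,
\[
k_T \cdot g \;=\; \psi_T(g)\, k_T,
\]
obtained by re-indexing the defining sum for $k_T$ via $u \mapsto ug^{-1}$ and using that $\psi_T$ is a homomorphism. Combining this with the observation $k_T m_T = \sum_{u \in U(T)} \psi_T(u^{-1}) m_{uT} = e_T$ (which is immediate from Lemma~\ref{properties}(2) and the definitions) yields $k_T (u m_T) = \psi_T(u)\, e_T$ for any $u \in U(T)$.

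In the case $U(T) \cap P(T') = \{1\}$, I would invoke Lemma~\ref{groupintersection} to produce $u_0 \in U(T)$ and $p_0 \in P(T')$ with $u_0 T = p_0 T'$. Two applications of parts (2) and (4) of Lemma~\ref{properties} then identify
\[
m_{T'} \;=\; m_{p_0 T'} \;=\; m_{u_0 T} \;=\; u_0\, m_T,
\]
so the preparatory identity gives $k_T m_{T'} = \psi_T(u_0)\, e_T \in \KK e_T$.

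In the case $U(T) \cap P(T') \neq \{1\}$, I would use Lemma~\ref{nontrivialvalue} to select a $g$ in the intersection with $\psi_T(g) \neq 1$. This $g$ plays a dual role: as an element of $P(T')$ it satisfies $g m_{T'} = m_{gT'} = m_{T'}$ by Lemma~\ref{properties}(4), while as an element of $U(T)$ it satisfies $k_T g = \psi_T(g)\, k_T$ by the preparatory identity. Associating the product two ways then gives
\[
k_T m_{T'} \;=\; k_T(g\, m_{T'}) \;=\; (k_T g)\, m_{T'} \;=\; \psi_T(g)\, k_T m_{T'},
\]
so $(1 - \psi_T(g))\, k_T m_{T'} = 0$, and since $\KK$ is a field with $\psi_T(g) \neq 1$ this forces $k_T m_{T'} = 0 \in \KK e_T$.

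The main obstacle is minor: essentially just recognizing that in Case 2 the element $g$ must be exploited simultaneously through its membership in $U(T)$ (to twist $k_T$) and in $P(T')$ (to fix $m_{T'}$), and keeping the change-of-variable bookkeeping straight for the identity $k_T g = \psi_T(g) k_T$. Everything else is a direct payoff of the two preceding lemmas.
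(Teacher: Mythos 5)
Your proposal is correct and follows essentially the same route as the paper: the same dichotomy on whether $U(T)\cap P(T')$ is trivial, with Lemma~\ref{groupintersection} and Lemma~\ref{properties} handling the first case and Lemma~\ref{nontrivialvalue} forcing $k_Tm_{T'}=0$ in the second. Your preparatory identity $k_Tg=\psi_T(g)k_T$ simply makes explicit a re-indexing step the paper leaves implicit.
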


\begin{proof} First suppose that $U(T) \cap P(T') = \{1\}$; by Lemma~\ref{groupintersection}, there exists $u \in U(T)$ and $p \in P(T')$ with $uT = pT'$. By Lemma~\ref{properties}, we have that
\[
        k_Tm_{T'} = k_T m_{pT'} = k_T m_{uT} = \psi_T(u)k_Tm_T = \psi_T(u)e_T.
\]
Suppose $U(T) \cap P(T') \neq \{1\}$; by Lemma~\ref{nontrivialvalue}, there exists $g \in U(T) \cap P(T')$ with $\psi_T(g) \neq 1$. We have that
\[
        k_Tm_{T'} = k_T m_{gT'} = \psi_T(g)k_Tm_{T'},
\]
hence $k_Tm_{T'} = 0$.
\end{proof}

\begin{lem}\label{differentshapes}
Let $\lambda$ and $\mu$ be partitions of $n$. If $T$ is an $\FF_q^n$-tableaux of shape $\lambda$ and $T'$ is an $\FF_q^n$-tableaux of shape $\mu$, then $k_Tm_{T'}=0$ unless $\mu \succeq \lambda$.
\end{lem}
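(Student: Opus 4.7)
The plan is to adapt the argument of Lemma~\ref{onedimension}: I will exhibit an element $g' \in U(T) \cap P(T')$ with $\psi_T(g') \neq 1$, from which
\[
        k_T m_{T'} = k_T m_{g'T'} = k_T g' m_{T'} = \psi_T(g') k_T m_{T'}
\]
forces $k_T m_{T'} = 0$. As in previous arguments, Lemma~\ref{properties} lets me replace $(T, T')$ by $(hT, hT')$ without changing whether $k_T m_{T'}$ vanishes, so I may assume $T$ is the standard tableau of shape $\lambda$, whence $U(T) = U_{\lambda'}$. Writing $T' = gT^*$ with $T^*$ the standard tableau of shape $\mu$ and Bruhat-decomposing $g = vwb$ with $v \in \UT_n(\FF_q)$, $w \in W$, $b \in B_n^-(\FF_q)$, the same reduction as in Lemma~\ref{groupintersection} (absorb $b \in B_n^- \subseteq P_{\mu'}^-$, and use that $v \in \UT_n \subseteq P_{\lambda'}$ normalizes $U_{\lambda'}$) gives
\[
        U(T) \cap P(T') = v\bigl(U_{\lambda'} \cap w P_{\mu'}^- w^{-1}\bigr) v^{-1}.
\]

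The main step is the combinatorial claim: if $\mu \not\succeq \lambda$, then there exists $(i,j) \in X(T)$ such that $1 + \alpha e_{ij} \in U_{\lambda'} \cap wP_{\mu'}^- w^{-1}$ for all $\alpha \in \FF_q$. Membership in $U_{\lambda'}$ is automatic for $(i,j) \in X(T)$ (since adjacent boxes in a row lie in consecutive columns of $T$), while membership in $wP_{\mu'}^- w^{-1}$ is equivalent to $\pi_\mu(w^{-1}(i)) \geq \pi_\mu(w^{-1}(j))$, where $\pi_\mu(k)$ denotes the column of $T^*$ containing its $k$-th box. I will argue by contradiction: if no such pair exists, then the filling $F$ of the diagram of $\lambda$ defined by $F(i) = \pi_\mu(w^{-1}(i))$ is strictly increasing along every row. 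Its content is $\mu'$ (value $k$ occurs $\mu_k'$ times), and for every $m$, row-strictness forces the cells with $F(\cdot) \leq m$ to form a left-justified subshape of $\lambda$ with at most $m$ cells per row, so
\[
        \mu_1' + \cdots + \mu_m' = |\{F \leq m\}| \leq \sum_r \min(m, \lambda_r) = \lambda_1' + \cdots + \lambda_m'.
\]
This yields $\lambda' \succeq \mu'$, equivalently $\mu \succeq \lambda$, contradicting the hypothesis.

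Given the combinatorial claim, I choose $\alpha \in \FF_q$ with $\theta(\alpha) \neq 1$ and set $g' = v(1 + \alpha e_{ij})v^{-1} \in U(T) \cap P(T')$. The explicit formula for $\psi_T$ on the root subgroup gives $\psi_T(1+\alpha e_{ij}) = \theta(\alpha)$ because $(i,j) \in X(T)$, and the $\UT_n$-conjugation invariance of $\psi_T$ used at the end of the proof of Lemma~\ref{nontrivialvalue} shows $\psi_T(g') = \theta(\alpha) \neq 1$. The opening display then forces $k_T m_{T'} = 0$. The main obstacle is the combinatorial claim in the second paragraph, a column-sum dominance inequality analogous to James' classical row-versus-column lemma for tableaux; once it is established, everything else closely parallels the proofs of the preceding three lemmas.
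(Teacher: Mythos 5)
Your proof is correct and takes essentially the same route as the paper: reduce to $T$ standard via Lemma~\ref{properties}, Bruhat-decompose $g$, identify $U(T)\cap P(T')$ with a conjugate of $U_{\lambda'}\cap wP_{\mu'}^-w^{-1}$, produce a root element $1+\alpha e_{ij}$ with $(i,j)\in X(T)$ on which $\psi_T$ is nontrivial, and conclude $k_Tm_{T'}=0$. The only difference is that the paper merely asserts the key combinatorial fact (that $\mu\not\succeq\lambda$ forces some $(i,j)\in X(T)$ with $v_i$ nonstrictly right of $v_j$ in $wT^*$), whereas you prove it by the row-strict filling and column-sum counting argument, which is a correct and welcome addition.
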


\begin{proof}
Suppose that $\mu \not\succeq \lambda$. It suffice to show that this is true when $T$ is the $\FF_q^n$-tableaux of shape $\lambda$ corresponding to the standard ordered basis of $\FF_q^n$. Let $g \in G$ be such that $g^{-1}T'$ is the $\FF_q^n$-tableaux of shape $\mu$ corresponding to the standard ordered basis of $\FF_q^n$. We have that
\[
        U(T) \cap P(T') = U(T) \cap g P(g^{-1}T')g^{-1}  = v(U(T) \cap w P(g^{-1}T')w^{-1})v^{-1},
\]
where $v \in UT_n(\FF_q)$, $w \in W$, and $vwP(g^{-1}T') = gP(g^{-1}T')$.

\bigbreak

As $\mu \not\succeq\lambda$, there must be at least one pair $(i,j) \in X(T)$ (that is, with $v_{i}$ directly left of $v_{j}$ in $T$), but with $v_i$ nonstrictly right of $v_j$ in $wg^{-1}T'$. Then $1+\alpha e_{ij} \in U(T)\cap wP(g^{-1}T')w^{-1}$ for all $\alpha \in \mathbb{F}_q$; as $\theta$ is nontrivial, for some $\alpha \in \mathbb{F}_q$, we have $\psi_T(1+\alpha e_{ij}) \neq 1$.

\bigbreak

By the construction of $\psi_T$, we have that $\psi_T(1+\alpha e_{ij}) = \psi_T(v(1+\alpha e_{ij})v^{-1})$ for all $v \in UT_n(\FF_q)$. In other words, there is an element of $x \in U(T) \cap P(T')$ with $\psi_T(x) \neq 1$. Then
\[
        k_Tm_{T'} = k_T m_{xT'} = \psi_T(x)k_Tm_{T'},
\]
hence $k_Tm_{T'} = 0$.
\end{proof}

One consequence of Lemma~\ref{onedimension} is the following proposition.

\begin{prop}\label{indecomposable}
The module $S^\lambda$ is indecomposable.
\end{prop}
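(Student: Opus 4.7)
The plan is to assume a decomposition $S^\lambda = M_1 \oplus M_2$ as $\KK G$-modules and argue that every $e_T$ must lie in a single common summand, forcing one of the $M_i$ to vanish. The key object is the element $k_T \in \KK G$: I will show that $k_T$ sends all of $S^\lambda$ into the one-dimensional space $\KK e_T$, while $k_T e_T$ is a \emph{nonzero} scalar multiple of $e_T$. Taken together, these pinch the alleged decomposition of $e_T$ through the line $\KK e_T$, which is where the argument closes.

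For the first property, I would observe that $S^\lambda \subseteq M^\lambda$, so any element of $S^\lambda$ is a $\KK$-linear combination of $m_{T'}$'s with $T'$ of shape $\lambda$; by Lemma~\ref{onedimension} each $k_T m_{T'}$ lies in $\KK e_T$, and hence so does $k_T x$ for all $x \in S^\lambda$. For the second, a short reindexing using that $\psi_T$ is a group homomorphism gives $k_T^2 = |U(T)|\, k_T$, so $k_T e_T = k_T^2 m_T = |U(T)|\, e_T$; since $|U(T)|$ is a power of $p$ and $\character(\KK) \neq p$, this scalar is invertible in $\KK$.

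With both ingredients in hand, write $e_T = m_1 + m_2$ with $m_i \in M_i$. Each $M_i$ is $\KK G$-stable, hence $k_T$-stable, so $k_T m_i \in M_i \cap \KK e_T$; writing $k_T m_i = c_i e_T$, we have $c_i e_T \in M_i$. Summing gives $c_1 + c_2 = |U(T)| \neq 0$, so at least one $c_i$ is nonzero, whence $e_T \in M_i$; since $M_1 \cap M_2 = 0$, exactly one summand contains $e_T$. The closing step is a transitivity argument: since $\GL_n(\FF_q)$ acts transitively on ordered bases of $\FF_q^n$, it acts transitively on $\FF_q^n$-tableaux of shape $\lambda$, and Lemma~\ref{properties}(2) gives $g\cdot e_T = e_{gT}$. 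Thus every $e_{T'}$ is a $G$-translate of $e_T$ and must lie in the same $M_i$; since the $e_{T'}$ span $S^\lambda$, we conclude $S^\lambda \subseteq M_i$, i.e., $M_{3-i} = 0$.

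I expect no serious obstacle; the only point that needs any care is the identification of $k_T$ as a ``projection to $\KK e_T$,'' which hinges on the invertibility of $|U(T)|$ in $\KK$ (and is exactly where the hypothesis $\character(\KK)\neq p$ is used). The remaining verifications are routine bookkeeping with Lemmas~\ref{properties} and~\ref{onedimension}.
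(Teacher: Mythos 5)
Your proposal is correct and follows essentially the same route as the paper: the key facts are $k_T S^\lambda \subseteq \KK e_T$ (Lemma~\ref{onedimension}) and $k_T e_T = |U(T)|\, e_T$ with $|U(T)|$ invertible in $\KK$, which force $e_T$ into one summand, and then $e_T$ generating $S^\lambda$ (via transitivity of $G$ on tableaux of shape $\lambda$) finishes the argument. Your only deviations are cosmetic: you obtain $k_T e_T = |U(T)| e_T$ from $k_T^2 = |U(T)| k_T$ rather than from Lemma~\ref{properties}(5), and you make explicit the transitivity step that the paper leaves implicit.
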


\begin{proof} Suppose that $S^\lambda = A \oplus B$ is a decomposition of $S^\lambda$ into a direct sum of $G$-modules. Let $T$ be any $\FF_q^n$-tableaux of shape $\lambda$; by Lemma~\ref{onedimension},
\[
        k_TS^\lambda \subseteq \KK e_T.
\]
At the same time,
\begin{align*}
        k_Te_T &= \sum_{u \in U(T)} \psi_T(u^{-1})ue_T \\
        & = \sum_{u \in U(T)} \psi_T(u^{-1})\psi_T(u)e_T \\ 
        &= |U(T)|e_T.
\end{align*}
As the characteristic of $\KK$ does not divide $|U(T)|$, we have that in fact
\[
        \KK e_T = k_TS^\lambda = k_T A \oplus k_T B,
\]
and $e_T \in k_T A$ or $e_T \in k_T B$. We may assume that $e_T \in k_T A$; as $A$ is a $\KK G$-module, $e_T \in A$. But $e_T$ generates $S^\lambda$ as a $\KK G$-module, hence $A = S^\lambda$.
\end{proof}

There is an immediate corollary of Proposition~\ref{indecomposable}.

\begin{cor} If $\textup{char}(\KK)$ does not divide $|G|$, then $S^\lambda$ is irreducible.  In particular, if $\KK$ has characteristic 0, then $S^\lambda$ is irreducible.
\end{cor}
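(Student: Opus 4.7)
The plan is to combine Proposition~\ref{indecomposable} with Maschke's theorem. Under the hypothesis that $\character(\KK)$ does not divide $|G|$, the group algebra $\KK G$ is semisimple, so every finite-dimensional $\KK G$-module decomposes as a direct sum of irreducible $\KK G$-modules. In particular, the finite-dimensional module $S^\lambda$ admits such a decomposition. Since Proposition~\ref{indecomposable} asserts that $S^\lambda$ cannot be written as a nontrivial direct sum of submodules, the decomposition must consist of a single summand, which forces $S^\lambda$ itself to be irreducible.

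For the second assertion, if $\character(\KK) = 0$ then trivially $\character(\KK) \nmid |G|$, so the conclusion is a special case of the first. The only thing to double-check is that Maschke's theorem applies in the generality needed here, namely for a field $\KK$ containing a nontrivial $p^\text{th}$ root of unity whose characteristic does not divide $|G|$; this is the standard hypothesis and requires no additional work. There is no substantive obstacle to overcome, as the corollary is purely a formal consequence of the indecomposability established in the previous proposition together with semisimplicity of $\KK G$.
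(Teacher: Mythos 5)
Your argument is correct and matches the paper's intent: the corollary is stated there as an immediate consequence of Proposition~\ref{indecomposable}, with the implicit reasoning being exactly Maschke's theorem (semisimplicity of $\KK G$ when $\textup{char}(\KK) \nmid |G|$) forcing an indecomposable module to be irreducible. No gaps; the characteristic-zero case follows just as you say.
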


Given an $\FF_q^n$-tableau $T$, let $\overline{T}$ be the $\FF_q^n$-tableau obtained by replacing the entry $v_i$ with $-v_i$ if $v_i$ is in an odd column of $T$ and fixing the entry $v_i$ if $v_i$ is in an even column of $T$. For example, if
\[
        T = \begin{tikzpicture}[node distance=0 cm,outer sep = 0pt, baseline={([yshift=-.5ex]current bounding box.center)}]
	      \node[bsq] (11) at (   0,  0) {$v_1$};
	      \node[bsq] (21) [below = of 11] {$v_2$};
	      \node[bsq] (31) [below = of 21] {$v_3$};
	      \node[bsq] (41) [below = of 31] {$v_4$};
	      \node[bsq] (12) [right = of 11] {$v_5$};
	      \node[bsq] (22) [below = of 12] {$v_6$};
	      \node[bsq] (32) [below = of 22] {$v_7$};
         	 \node[bsq] (13) [right = of 12] {$v_8$};
          \node[bsq] (14) [right = of 13] {$v_9$};
\end{tikzpicture} , \quad \text{then} \quad
\overline{T} = \begin{tikzpicture}[node distance=0 cm,outer sep = 0pt, baseline={([yshift=-.5ex]current bounding box.center)}]
	      \node[bsq] (11) at (   0,  0) {$-v_1$};
	      \node[bsq] (21) [below = of 11] {$-v_2$};
	      \node[bsq] (31) [below = of 21] {$-v_3$};
	      \node[bsq] (41) [below = of 31] {$-v_4$};
	      \node[bsq] (12) [right = of 11] {$v_5$};
	      \node[bsq] (22) [below = of 12] {$v_6$};
	      \node[bsq] (32) [below = of 22] {$v_7$};
         	 \node[bsq] (13) [right = of 12] {$-v_8$};
          \node[bsq] (14) [right = of 13] {$v_9$};
\end{tikzpicture}.
\]

\begin{lem}\label{tbar} For all $\FF_q^n$-tableaux $T$, we have that $U(\overline{T}) = U(T)$, $\overline{T}\in P(T)\cdot T$, and $\psi_{\overline{T}}(u) = \psi_T(u^{-1})$.
\end{lem}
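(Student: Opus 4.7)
The plan is to dispatch parts (1) and (2) essentially from the definitions, then establish (3) by a direct coefficient comparison. Part (1) will be routine: the definition of $U(T)$ refers only to the $\FF_q$-spans of the entries lying strictly left of each position, and these spans are unaffected by negating individual basis entries, so the defining condition for $U(\overline{T})$ coincides with that for $U(T)$.

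For (2), I would exhibit an explicit $p \in P(T)$ with $pT = \overline{T}$. Let $p \in \GL_n(\FF_q)$ be the operator which, in the basis $\cB(T) = \{v_1,\dots,v_n\}$, sends $v_i \mapsto -v_i$ when $v_i$ lies in an odd column of $T$ and fixes $v_i$ otherwise. Since each $v_i$ belongs to the $\FF_q$-span of entries nonstrictly right of itself, this $p$ lies in $P(T)$, and $pT = \overline{T}$ by construction.

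For (3), I would fix $u \in U(T) = U(\overline{T})$ and, for each $(i,j) \in X(T)$, write $c_{ij}(u)$ for the coefficient of $v_i$ in $uv_j$. Letting $\epsilon_i \in \{\pm 1\}$ be the sign introduced by the overline so that the entries of $\overline{T}$ are $\epsilon_i v_i$, the coefficient of $\epsilon_i v_i$ in $u(\epsilon_j v_j) = \epsilon_j uv_j$, expanded in the basis $\{\epsilon_k v_k\}$, is $\epsilon_i \epsilon_j c_{ij}(u)$. For any $(i,j) \in X(T)$ the entries $v_i, v_j$ occupy adjacent columns, so $\epsilon_i\epsilon_j = -1$, which gives
\[
\psi_{\overline{T}}(u) = \theta\Bigl(-\sum_{(i,j) \in X(T)} c_{ij}(u)\Bigr).
\]

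To match this with $\psi_T(u^{-1})$, I would write $u = I + N$ (so $Nv_k$ lies in the span of entries strictly left of $v_k$ in $T$) and expand $u^{-1} = I - N + N^2 - \cdots$. The main (and essentially only) substantive step is the observation that for $(i,j) \in X(T)$ the coefficient of $v_i$ in $N^k v_j$ vanishes for all $k \geq 2$: any such contribution is indexed by a chain $v_j \to v_{\ell_1} \to \cdots \to v_i$ of entries with strictly decreasing column numbers, which is impossible when $v_i, v_j$ already lie in adjacent columns. Therefore $c_{ij}(u^{-1}) = -c_{ij}(u)$ for each $(i,j) \in X(T)$, and $\psi_T(u^{-1})$ yields the same exponential as above. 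The column-adjacency vanishing is the point where I would need to argue carefully; everything else is bookkeeping about signs and basis changes.
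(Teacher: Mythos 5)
Your proposal is correct and takes essentially the same route as the paper: both hinge on the observation that for each $(i,j)\in X(T)$ the entries $v_i,v_j$ lie in adjacent columns, so exactly one sign flips and $\psi_{\overline{T}}(u)=\theta\bigl(-\sum_{(i,j)\in X(T)}c_{ij}(u)\bigr)$. Your nilpotent-expansion check that $c_{ij}(u^{-1})=-c_{ij}(u)$ merely makes explicit the paper's final identification of this quantity with $\psi_T(u^{-1})$, which can also be seen from $\psi_T$ being a linear character and $\theta$ a homomorphism.
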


\begin{proof} The first two claims are trivial. For the third, recall that
\[
		\psi_{T}(u) = \theta\left(\sum_{(i,j) \in X(T)} \text{the coefficient of } v_i \text{ in }uv_j \right),
\]
where once again $X(T)$ is the set of pairs $(i,j)$ such that $v_i$ lies in the box directly to the left of $v_j$ in $T$. If $(i,j) \in X(T)$, then exactly one of $v_i$ or $v_j$ has its sign switched in $\overline{T}$. It follows that
\[
        \psi_{\overline{T}}(u) = \theta\left(-\sum_{(i,j) \in X(T)} \text{the coefficient of } v_i \text{ in }uv_j \right) = \psi_{T}(u^{-1}).
\]
\end{proof}

We define a bilinear form on $M^\lambda$ by
\[
        [m_T,m_{T'}] = \delta_{m_T,m_{T'}},
\]
and extending by linearity. We remark that $\delta_{m_T,m_{T'}}$ is not the same as $\delta_{T,T'}$, as $m_T = m_{T'}$ exactly when $T' \in P(T) \cdot T$.

\begin{prop}\label{submodule} Let $V$ be a submodule of $M^\lambda$; then either $S^\lambda \subseteq V$ or $V \subseteq (S^\lambda)^\perp$.
\end{prop}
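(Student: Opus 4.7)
The plan is to follow the standard dichotomy argument, analogous to James' submodule theorem for the symmetric group, by asking whether $k_T$ annihilates every element of $V$ for every $\FF_q^n$-tableau $T$ of shape $\lambda$. If not, I will show $S^\lambda \subseteq V$; otherwise, I will show $V \subseteq (S^\lambda)^\perp$.

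First I would set up the key compatibility between the bilinear form and the operators $k_T$. The form $[\,\cdot\,,\,\cdot\,]$ is $G$-invariant in the sense that $[g \cdot x, y] = [x, g^{-1} \cdot y]$: this follows because $m_{gT} = m_{gT'}$ precisely when $m_T = m_{T'}$, via the identity $P(gT) = gP(T)g^{-1}$ from Lemma~\ref{properties}(1). Extending linearly, if $h = \sum_g \alpha_g g \in \KK G$ and $h^* := \sum_g \alpha_g g^{-1}$, then $[h \cdot x, y] = [x, h^* \cdot y]$. A direct calculation using Lemma~\ref{tbar} then identifies $k_T^* = k_{\overline{T}}$, since $U(\overline{T}) = U(T)$ and $\psi_{\overline{T}}(u^{-1}) = \psi_T(u)$ give $k_{\overline{T}} = \sum_u \psi_T(u) u$, which after the substitution $w = u^{-1}$ is exactly $k_T^*$.

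\textbf{Case 1.} Suppose some $v \in V$ and some $\FF_q^n$-tableau $T$ of shape $\lambda$ satisfy $k_T v \neq 0$. Expanding $v$ as a $\KK$-linear combination of $m_{T'}$'s and applying Lemma~\ref{onedimension} term by term shows $k_T v \in \KK e_T$; since it is nonzero, $e_T \in V$. Because $\GL_n(\FF_q)$ acts transitively on ordered bases of $\FF_q^n$, it acts transitively on $\FF_q^n$-tableaux of shape $\lambda$, so by Lemma~\ref{properties}(2) every $e_{T'}$ lies in $\KK G \cdot e_T \subseteq V$. Hence $S^\lambda \subseteq V$.

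\textbf{Case 2.} Otherwise $k_T v = 0$ for every $v \in V$ and every $\FF_q^n$-tableau $T$ of shape $\lambda$. For arbitrary $v \in V$ and $T'$, the tableau $\overline{T'}$ also has shape $\lambda$, so the hypothesis gives $k_{\overline{T'}} v = 0$. Then
\[
[v, e_{T'}] = [v, k_{T'} m_{T'}] = [k_{T'}^* v, m_{T'}] = [k_{\overline{T'}} v, m_{T'}] = 0.
\]
Since the $e_{T'}$ span $S^\lambda$, this shows $V \subseteq (S^\lambda)^\perp$.

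The main obstacle is recognizing that the correct adjoint of $k_T$ with respect to the bilinear form is $k_{\overline{T}}$; this is precisely the reason the author introduced $\overline{T}$ and proved Lemma~\ref{tbar}, and it is what allows Case 2 to collapse to a one-line calculation.
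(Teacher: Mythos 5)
Your proof is correct and follows essentially the same route as the paper: the dichotomy on whether some $k_T$ acts nonzero on $V$, with Case 1 handled by Lemma~\ref{onedimension} plus the fact that $e_T$ generates $S^\lambda$, and Case 2 by the computation $[v,e_T]=[k_{\overline{T}}v,m_T]=0$ using $G$-invariance of the form and Lemma~\ref{tbar}. Your explicit identification of $k_{\overline{T}}$ as the adjoint of $k_T$ just makes precise the calculation the paper performs inline.
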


\begin{proof} First suppose that there exists $x \in V$ and an $\FF_q^n$-tableau $T$ such that $k_T x  \neq 0$; then by Lemma~\ref{onedimension}, we have $e_T \in V$, hence $S^\lambda \subseteq V$.

\bigbreak

Conversely suppose that, for all $x \in V$ and all $\FF_q^n$-tableaux $T$, we have $k_T x = 0$. Then
\begin{align*}
        [x,e_T] 
        &= \sum_{u \in U(T)}\psi_T(u^{-1})[x,um_T] \\
        & = \sum_{u \in U(T)}\psi_T(u^{-1})[u^{-1}x,m_T] \\
        & = [k_{\overline{T}}x, m_T] \\
        & = [0,m_T] = 0,
\end{align*}
as the bilinear form is $G$-invariant. It follows that $V \subseteq (S^\lambda)^\perp$.
\end{proof}

When constructing the irreducible representations of the symmetric groups, it is possible to have $S^\lambda \subseteq (S^\lambda)^\perp$; that will not be the case, however, with the finite general linear groups.

\begin{lem}\label{basisnotperp} If $T$ is any $\FF_q^n$-tableaux, then $e_T \notin (S^\lambda)^\perp$. In particular, $S^\lambda \not\subseteq (S^\lambda)^\perp$.
\end{lem}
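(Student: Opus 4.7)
The plan is to exhibit an element of $S^\lambda$ whose pairing with $e_T$ is nonzero. A convenient choice is $e_{\overline{T}}$, which lies in $S^\lambda$ by construction, where $\overline{T}$ is the tableau from Lemma~\ref{tbar}. The strategy is to compute $[e_T, e_{\overline{T}}]$ explicitly and show that it equals $|U(T)|$.

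First I will rewrite $e_{\overline{T}}$ as a linear combination of the vectors $\{m_{uT} : u \in U(T)\}$. By Lemma~\ref{tbar}, $U(\overline{T}) = U(T)$, $\psi_{\overline{T}}(u^{-1}) = \psi_T(u)$, and $\overline{T} \in P(T)\cdot T$, so Lemma~\ref{properties}(4) gives $m_{\overline{T}} = m_T$ and hence $m_{u\overline{T}} = u\cdot m_{\overline{T}} = m_{uT}$ for all $u \in U(T)$. Substituting into the definition of $e_{\overline{T}}$ yields
$$
e_{\overline{T}} = \sum_{u \in U(T)} \psi_T(u)\, m_{uT},
$$
and expanding the bilinear form then gives
$$
[e_T, e_{\overline{T}}] = \sum_{u, u' \in U(T)} \psi_T(u^{-1})\,\psi_T(u')\, [m_{uT}, m_{u'T}].
$$

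The key combinatorial step is to verify that $[m_{uT}, m_{u'T}] = \delta_{u,u'}$. Since $[m_S, m_{S'}] = 1$ precisely when $m_S = m_{S'}$, and $m_{uT} = m_{u'T}$ iff $u'T \in P(uT)\cdot uT$, Lemma~\ref{properties}(1) reduces this condition to $u^{-1}u' \in U(T) \cap P(T)$. After passing to the standard-basis tableau by $G$-equivariance, where $U(T) = U_{\lambda'}$ and $P(T) = P_{\lambda'}^-$ (cf. Example~\ref{example1}), one reads off directly that this intersection is trivial, forcing $u = u'$. Combining this with $\psi_T(u^{-1})\psi_T(u) = \psi_T(1) = 1$ collapses the double sum to
$$
[e_T, e_{\overline{T}}] = |U(T)|,
$$
which is nonzero in $\KK$ since $|U(T)|$ is a power of $p$ and $\character(\KK) \neq p$. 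This forces $e_T \notin (S^\lambda)^\perp$, and the ``in particular'' statement follows because each $e_T$ lies in $S^\lambda$.

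The main obstacle I expect is the bookkeeping in the first step: unpacking how the $P(T)$-equivalence of $T$ and $\overline{T}$ collapses each $m_{u\overline{T}}$ back to $m_{uT}$, so that $e_T$ and $e_{\overline{T}}$ are supported on the same $\FF_q^n$-tableaux. Without this reconciliation, the pairing would mix $m_{uT}$'s with $m_{u\overline{T}}$'s and would not simplify. Once this rewriting is in hand, the rest is the standard orthogonality calculation for a character against its inverse.
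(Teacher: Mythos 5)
Your proof is correct and takes essentially the same route as the paper: pair $e_T$ against $e_{\overline{T}}$, use Lemma~\ref{tbar} to see the two characters cancel, and conclude $[e_T,e_{\overline{T}}] = |U(T)| \neq 0$ since $\character(\KK)\neq p$. Your intermediate verification that the $m_{uT}$, $u\in U(T)$, are pairwise distinct (via $U(T)\cap P(T)=\{1\}$) simply makes explicit a step the paper leaves implicit.
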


\begin{proof}
We have that
\begin{align*}
        [e_T,e_{\overline{T}}]
        & = \sum_{u_1,u_2 \in U(T)} \psi_T(u_1^{-1})\psi_{\overline{T}}(u_2^{-1})[m_{u_1T},m_{u_2\overline{T}}] \\
        & = \sum_{u \in U(T)} \psi_T(u^{-1})\psi_{\overline{T}}(u^{-1}) \\
        & = |U(T)|,
\end{align*}
which is not $0$ as $\text{char}(\KK)$ does not divide $q$.
\end{proof}

The following corollary is an immediate consequence of Proposition~\ref{submodule}.

\begin{cor} We have the following.
\begin{enumerate}
\item $S^\lambda \cap (S^\lambda)^\perp$ is the unique maximal submodule of $S^\lambda$.
\item The $G$-module $S^\lambda/(S^\lambda \cap (S^\lambda)^\perp)$ is irreducible.
\end{enumerate}
\end{cor}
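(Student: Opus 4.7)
The plan is to deduce both parts directly from Proposition~\ref{submodule} and Lemma~\ref{basisnotperp}, with no new computation required. First I would observe that $(S^\lambda)^\perp$ is a $G$-submodule of $M^\lambda$, since the bilinear form $[\cdot,\cdot]$ is $G$-invariant; hence $S^\lambda \cap (S^\lambda)^\perp$ is a $G$-submodule of $S^\lambda$. By Lemma~\ref{basisnotperp}, this submodule is proper in $S^\lambda$.

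For part (1), I would argue as follows. Let $V$ be any proper $G$-submodule of $S^\lambda$. Since $S^\lambda \subseteq M^\lambda$, the subspace $V$ is also a $G$-submodule of $M^\lambda$, so Proposition~\ref{submodule} applies: either $S^\lambda \subseteq V$ or $V \subseteq (S^\lambda)^\perp$. The first alternative would force $V = S^\lambda$, contradicting properness, so $V \subseteq (S^\lambda)^\perp$ and hence $V \subseteq S^\lambda \cap (S^\lambda)^\perp$. Thus every proper submodule of $S^\lambda$ sits inside $S^\lambda \cap (S^\lambda)^\perp$, which is itself proper, so it is the unique maximal submodule.

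For part (2), once uniqueness of the maximal submodule is established, irreducibility of the quotient is formal: any nonzero $G$-submodule of $S^\lambda/(S^\lambda\cap (S^\lambda)^\perp)$ lifts to a submodule of $S^\lambda$ strictly containing $S^\lambda\cap (S^\lambda)^\perp$, which by maximality must equal $S^\lambda$, so the quotient has no nonzero proper submodules.

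I do not anticipate any real obstacle here; the entire content of the corollary has been packaged into Proposition~\ref{submodule} and Lemma~\ref{basisnotperp}, and the only subtlety is verifying that $(S^\lambda)^\perp$ is a submodule (i.e.\ that $[\cdot,\cdot]$ is $G$-invariant), which is immediate from the definition $[m_T,m_{T'}] = \delta_{m_T,m_{T'}}$ together with part (2) of Lemma~\ref{properties}.
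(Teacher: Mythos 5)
Your argument is correct and is exactly the route the paper intends: the paper states the corollary as an immediate consequence of Proposition~\ref{submodule} (with Lemma~\ref{basisnotperp} guaranteeing that $S^\lambda \cap (S^\lambda)^\perp$ is proper), and your write-up simply spells out that deduction, including the routine check that $G$-invariance of the form makes $(S^\lambda)^\perp$ a submodule. No gaps.
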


Define $D^\lambda = S^\lambda/(S^\lambda \cap (S^\lambda)^\perp)$.

\begin{prop}\label{dlambdaproperties}
Let $\lambda$ and $\mu$ be partitions of $n$; then we have the following.
\begin{enumerate}
\item If $D^\lambda$ is a composition factor of $M^\mu$, then $\lambda \preceq \mu$.
\item $D^\lambda$ is a composition factor of $M^\lambda$.
\item If $D^\lambda \cong D^\mu$, then $\lambda = \mu$.
\end{enumerate}
\end{prop}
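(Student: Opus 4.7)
My plan is to dispose of (2) first as an immediate consequence of the construction, then to prove (1) by using the element $k_T \in \KK G$ to detect where $D^\lambda$ can appear as a subquotient, and finally to derive (3) formally from (1) and (2).

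For (2), I would simply observe that $S^\lambda$ is by construction a submodule of $M^\lambda$ (both are defined as $\KK$-spans inside the same permutation module on $\FF_q^n$-tableaux of shape $\lambda$), and $D^\lambda = S^\lambda/(S^\lambda \cap (S^\lambda)^\perp)$ is an irreducible quotient of $S^\lambda$. Jordan--H\"older then places $D^\lambda$ among the composition factors of $M^\lambda$.

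The heart of the argument is (1), and the approach hinges on a single observation: for any $\FF_q^n$-tableau $T$ of shape $\lambda$, multiplication by $k_T$ acts \emph{nontrivially} on $D^\lambda$. This is because the computation $k_T e_T = |U(T)|\, e_T$ from the proof of Proposition~\ref{indecomposable} produces a nonzero scalar multiple of $e_T$ (since $\text{char}(\KK) \ne p$), and Lemma~\ref{basisnotperp} guarantees that $e_T$ has nonzero image in $D^\lambda$. From here I would argue as follows. Suppose $D^\lambda$ is a composition factor of $M^\mu$, and pick $\KK G$-submodules $V \subseteq V' \subseteq M^\mu$ with $V'/V \cong D^\lambda$. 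Since $k_T \in \KK G$, multiplication by $k_T$ preserves both $V$ and $V'$ and descends to a map on the quotient, which by the preceding paragraph is nonzero. Hence $k_T V' \not\subseteq V$, and in particular $k_T M^\mu \ne 0$. But $M^\mu$ is spanned by the elements $m_{T'}$ for $T'$ an $\FF_q^n$-tableau of shape $\mu$, and Lemma~\ref{differentshapes} forces $k_T m_{T'} = 0$ whenever $\mu \not\succeq \lambda$. This contradiction gives $\mu \succeq \lambda$, i.e.\ $\lambda \preceq \mu$.

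Part (3) will then follow formally: if $D^\lambda \cong D^\mu$, then (2) gives that $D^\mu$ is a composition factor of $M^\mu$, so via the isomorphism $D^\lambda$ is also such a composition factor, and (1) yields $\lambda \preceq \mu$; swapping the roles of $\lambda$ and $\mu$ gives the reverse inequality. The main obstacle, such as it is, will be making precise the passage from ``$k_T$ acts nontrivially on the subquotient $D^\lambda$'' to ``$k_T M^\mu \ne 0$''; but this is just bookkeeping with $\KK G$-submodules, and the real work has already been done in Lemmas~\ref{basisnotperp} and~\ref{differentshapes}.
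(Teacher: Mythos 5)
Your proposal is correct and follows essentially the same route as the paper: part (1) is detected by the element $k_T$, using the scalar action $k_T e_T = |U(T)|e_T$ together with Lemma~\ref{differentshapes}, while (2) is immediate from the construction and (3) is formal from (1) and (2). The only cosmetic difference is that you fix a tableau $T$ and invoke Lemma~\ref{basisnotperp} to see that $k_T$ acts nontrivially on $D^\lambda$, whereas the paper chooses $T$ so that a nonzero homomorphism $D^\lambda \to M^\mu/V$ does not kill the class of $e_T$; these amount to the same argument.
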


\begin{proof}
Suppose that $D^\lambda$ is a composition factor of $M^\mu$; then we have a nonzero $G$-module homomorphism
\[
       \varphi : D^\lambda \to M^\mu/V
\]
for some submodule $V$ of $M^\mu$. As the elements $e_T+S^\lambda \cap (S^\lambda)^\perp$ generate $D^\lambda$, for some $\FF_q^n$-tableau $T$ we have $\varphi(e_T+S^\lambda \cap (S^\lambda)^\perp) \neq 0$. Note that $k_Te_T$ is a nonzero multiple of $e_T$, hence $k_T\varphi(e_T+S^\lambda \cap (S^\lambda)^\perp) \neq 0$. By Lemma~\ref{differentshapes}, $\lambda \preceq\mu$.

\bigbreak

Claim (2) follows directly from the definition of $D^\lambda$, and (3) is an immediate consequence of (1) and (2).
\end{proof}

In \cite{MR776229}, James constructs a collection of irreducible modules of $G$, one for each partition of $n$. James denotes these modules by $D_\lambda$, and shows the following.
\begin{enumerate}
\item The modules $D_\lambda$ are the unipotent modules of $G$. In other words, these modules are exactly the composition factors of $\Ind_B^G(\One)$, up to isomorphism.
\item If $D_\lambda \cong D_\mu$, then $\lambda = \mu$.
\item The module $D_\lambda$ is a composition factor of $\Ind_{P_\lambda}^G(\One)$.
\item Every composition factor of $\Ind_{P_\mu}^G(\One)$ is isomorphic to $D_\lambda$ for some $\lambda \succeq \mu$.
\end{enumerate}

Note that the modules $D_\lambda$ are uniquely characterized (up to isomorphism) by properties (2)--(4). As $M^\lambda \cong \Ind_{P_{\lambda'}^-}^G(\One)$ and $\Ind_{P_{\lambda'}^-}^G(\One) \cong \Ind_{P_{\lambda'}}^G(\One)$ (see \cite[14.7]{MR776229}), by Proposition~\ref{dlambdaproperties} we have the following.

\begin{cor} We have that $D^\lambda \cong D_{\lambda'}$. In particular, the $D^\lambda$ are the irreducible unipotent modules of $G$.
\end{cor}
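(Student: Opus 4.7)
The plan is to invoke James' uniqueness characterization: since properties (2)--(4) determine the modules $D_\lambda$ up to isomorphism, it suffices to verify that the family $\{D^{\lambda'}\}_\lambda$ (indexed by conjugates) satisfies these three properties. I will work through the properties one at a time, using Proposition~\ref{dlambdaproperties} together with the isomorphisms $M^\mu \cong \Ind_{P_{\mu'}^-}^G(\One) \cong \Ind_{P_{\mu'}}^G(\One)$, and the standard fact that conjugation reverses the dominance order, that is, $\nu \preceq \sigma$ if and only if $\sigma' \preceq \nu'$.

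First, property (2) is immediate: if $D^{\lambda'} \cong D^{\mu'}$, then part (3) of Proposition~\ref{dlambdaproperties} gives $\lambda' = \mu'$, hence $\lambda = \mu$. Next, for property (3), I need $D^{\lambda'}$ to be a composition factor of $\Ind_{P_\lambda}^G(\One)$. Using $\Ind_{P_\lambda}^G(\One) \cong M^{\lambda'}$, this is exactly the content of part (2) of Proposition~\ref{dlambdaproperties} applied to the partition $\lambda'$. Finally, for property (4), let $V$ be a composition factor of $\Ind_{P_\mu}^G(\One) \cong M^{\mu'}$. Part (1) of Proposition~\ref{dlambdaproperties} gives $V \cong D^\nu$ for some $\nu \preceq \mu'$; setting $\lambda = \nu'$, we have $V \cong D^{\lambda'}$ with $\lambda \succeq \mu$, as required.

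Having verified all three characterizing properties, I conclude $D^{\lambda'} \cong D_\lambda$, or equivalently $D^\lambda \cong D_{\lambda'}$. The "in particular" assertion that the $D^\lambda$ are exactly the irreducible unipotent modules of $G$ then follows from James' property (1): the modules $D_\lambda$, and hence the modules $D^\lambda$, are precisely the composition factors of $\Ind_{B_n(\FF_q)}^G(\One)$.

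The argument is essentially a bookkeeping exercise, so the only real subtlety is keeping track of the conjugation. The potentially confusing step is checking property (4), where one must remember that dominance reverses under conjugation so the direction of the inequality flips correctly when rewriting $\nu \preceq \mu'$ as $\lambda \succeq \mu$.
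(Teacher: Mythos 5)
Your overall strategy is the one the paper itself uses (invoke James' uniqueness characterization and verify the properties via Proposition~\ref{dlambdaproperties} together with $M^{\mu'} \cong \Ind_{P_\mu}^G(\One)$), and your checks of properties (2) and (3), as well as the conjugation bookkeeping, are fine. However, there is a genuine gap in your verification of property (4). Proposition~\ref{dlambdaproperties}(1) is a conditional statement: \emph{if} $D^\nu$ is a composition factor of $M^{\mu'}$, \emph{then} $\nu \preceq \mu'$. It does not assert that an arbitrary composition factor $V$ of $M^{\mu'} \cong \Ind_{P_\mu}^G(\One)$ is isomorphic to some $D^\nu$. That exhaustion statement is essentially the ``in particular'' clause of the corollary you are proving (that the $D^\nu$ account for all the relevant irreducibles), and nothing established earlier in the paper gives it directly; so the step ``Part (1) gives $V \cong D^\nu$ for some $\nu \preceq \mu'$'' is unjustified, and the uniqueness characterization as you invoke it needs (4) in full strength, since one must know in particular that James' module $D_\lambda$ itself is isomorphic to a member of the new family.

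The gap is repairable without verifying (4) at all, by squeezing with dominance from both sides: by Proposition~\ref{dlambdaproperties}(2), $D^{\lambda'}$ is a composition factor of $M^{\lambda'} \cong \Ind_{P_\lambda}^G(\One)$, so James' property (4) (known for his modules) gives $D^{\lambda'} \cong D_\sigma$ for some $\sigma \succeq \lambda$; by James' property (3), $D_\sigma$ is a composition factor of $\Ind_{P_\sigma}^G(\One) \cong M^{\sigma'}$, so Proposition~\ref{dlambdaproperties}(1) gives $\lambda' \preceq \sigma'$, i.e.\ $\sigma \preceq \lambda$; hence $\sigma = \lambda$ and $D^{\lambda'} \cong D_\lambda$. (Alternatively, one can note that the characterization survives with (4) weakened to the statement of Proposition~\ref{dlambdaproperties}(1): the resulting assignment $\lambda \mapsto \sigma(\lambda)$ with $D^{\lambda'} \cong D_{\sigma(\lambda)}$ and $\sigma(\lambda) \succeq \lambda$ is injective by property (2), and a bijection of a finite poset that weakly increases every element must be the identity.) With the isomorphism in hand, your deduction of the ``in particular'' clause from James' property (1) is correct.
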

\begin{remark}
The indexing of our modules and those of James differs by the transpose of the partition. Our indexing is chosen to match the convention of Green \cite{MR0072878} and MacDonald \cite{MR1354144}.
\end{remark}

\section{The irreducible unipotent modules over the complex numbers}\label{complex}

In this section we consider the specific case $\KK = \CC$, in which the modules $S^\lambda$ are in fact irreducible. First, we recall some facts about the character theory of $S_n$ and $\GL_n(\FF_q)$, and the connection to symmetric functions.

\bigbreak

The irreducible complex characters of $S_n$ are indexed by the partitions of $n$; let $\psi^\lambda$ denote the irreducible character of $S_n$ corresponding to $\lambda$, as in \cite{MR1354144}. Note that $\psi^{(n)}$ is the trivial character and $\psi^{(1^n)}$ is the sign character (denoted $\epsilon$). The following proposition follows from the Pieri rules (see \cite[I.5]{MR1354144}).

\begin{prop}
Let $\lambda$ be a partition of $n$, and let $W_\lambda$ be a Young subgroup of $S_n$ of shape $\lambda$. Then
\[
        \Ind_{W_\lambda}^{S_n}(\One) = \sum_{\mu \succeq \lambda} K_{\mu\lambda}\psi^\mu \quad \text{and}\quad \Ind_{W_\lambda}^{S_n}(\epsilon) = \sum_{\mu' \succeq \lambda} K_{\mu'\lambda}\psi^\mu,
\]
where the $K_{\mu\lambda}$ are the Kostka numbers (see \cite[I.6.4]{MR1354144}).
\end{prop}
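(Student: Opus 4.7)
The plan is to translate the identity into the ring of symmetric functions via the Frobenius characteristic map, where the Pieri rules appear in their most familiar form. Let $\ch$ denote the usual isometric isomorphism from the space of virtual characters of $S_n$ to the degree $n$ component $\Lambda^n$ of the ring of symmetric functions, normalized so that $\ch(\psi^\mu) = s_\mu$, the Schur function. The equalities to be proved then reduce, after applying $\ch^{-1}$, to the Schur expansions
\[
h_\lambda = \sum_{\mu} K_{\mu\lambda}\, s_\mu \qquad \text{and} \qquad e_\lambda = \sum_\mu K_{\mu'\lambda}\, s_\mu,
\]
together with the observation that $K_{\mu\lambda}\neq 0$ forces $\mu\succeq\lambda$.

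The first step is to identify the two induced characters on the symmetric function side. Because $W_\lambda = S_{\lambda_1}\times\cdots\times S_{\lambda_k}$ and $\ch$ sends induction from a Young subgroup to multiplication in $\Lambda$, we have
\[
\ch\bigl(\Ind_{W_\lambda}^{S_n}(\One)\bigr) = \prod_i \ch(\One_{S_{\lambda_i}}) = \prod_i h_{\lambda_i} = h_\lambda,
\]
where the middle equality uses the well-known fact $\ch(\One_{S_m}) = h_m$. An entirely analogous calculation, using $\ch(\epsilon_{S_m}) = e_m$, gives $\ch(\Ind_{W_\lambda}^{S_n}(\epsilon)) = e_\lambda$.

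The second step is to expand $h_\lambda$ and $e_\lambda$ in the Schur basis. The Pieri rule for $h_r \cdot s_\nu$ expresses the product as a nonnegative sum of $s_\mu$ with $\mu$ obtained from $\nu$ by adding a horizontal $r$-strip; iterating this rule from $s_\emptyset$ up through $h_{\lambda_1}\cdots h_{\lambda_k}$ enumerates semistandard Young tableaux of shape $\mu$ and content $\lambda$, which is precisely the definition of the Kostka number $K_{\mu\lambda}$. The triangularity statement $K_{\mu\lambda}=0$ unless $\mu\succeq\lambda$ is standard (it is immediate from the semistandard condition, since the first $k$ rows of $\mu$ must accommodate the $\lambda_1+\cdots+\lambda_k$ smallest entries). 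Applying the involution $\omega$, which swaps $h_r \leftrightarrow e_r$ and sends $s_\mu$ to $s_{\mu'}$, converts the first identity into the second with $\mu$ replaced by $\mu'$.

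Applying $\ch^{-1}$ to these two expansions yields the proposition. There is no real obstacle; the content of the statement is entirely in the Pieri rule and the identifications of $\ch(\Ind_{W_\lambda}^{S_n}(\One))$ and $\ch(\Ind_{W_\lambda}^{S_n}(\epsilon))$, both of which are standard and may be invoked directly from \cite{MR1354144}.
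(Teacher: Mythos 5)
Your argument is correct and is essentially the paper's own: the paper simply asserts that the proposition follows from the Pieri rules in \cite[I.5]{MR1354144}, and your proof spells this out via the Frobenius characteristic, the identities $\ch(\Ind_{W_\lambda}^{S_n}(\One))=h_\lambda$ and $\ch(\Ind_{W_\lambda}^{S_n}(\epsilon))=e_\lambda$, the iterated Pieri expansion giving the Kostka numbers, and the standard triangularity $K_{\mu\lambda}\neq 0\Rightarrow\mu\succeq\lambda$. No gaps; you have just made explicit the details the paper leaves to the reference.
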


In particular, if $W_\lambda$ is a Young subgroup of shape $\lambda$ and $W_{\lambda'}$ is a Young subgroup of shape $\lambda'$, we have
\[
        \langle \Ind_{W_\lambda}^{S_n}(\One), \Ind_{W_{\lambda'}}^{S_n}(\epsilon) \rangle = 1,
\]
and the common irreducible constituent is $\psi^\lambda$. A standard technique, in the case of $S_n$, is to construct an irreducible submodule $V^\lambda$ of $\Ind_{W_\lambda}^{S_n}(\One)$, and show that there is a one-dimensional subspace of $V^\lambda$ on which $W_{\lambda'}$ acts as $\epsilon$. In other words,
\[
       \langle \Ind_{W_\lambda}^{S_n}(\One), V^\lambda \rangle \neq 0
\]
and
\[
        \langle \Ind_{W_{\lambda'}}^{S_n}(\epsilon), V^\lambda \rangle
        = \langle \epsilon, \Res_{W_{\lambda'}}^{S_n}(V^\lambda) \rangle \neq 0.
\]
This forces $V^\lambda$ to be a module affording the character $\psi^\lambda$.

\bigbreak

In order to apply a similar approach to construct the irreducible unipotent modules of $\GL_n(\FF_q)$, we first need to determine modules that are analogous to $\Ind_{W_\lambda}^{S_n}(\One)$ and $\Ind_{W_{\lambda'}}^{S_n}(\epsilon)$. The irreducible unipotent characters of $\GL_n(\FF_q)$ are indexed by partitions of $n$; let $\chi^\lambda$ be the irreducible unipotent character corresponding to $\lambda$, as in \cite{MR0072878,MR1354144}. Note that $\chi^{(1^n)}$ is the trivial character and $\chi^{(n)}$ is the Steinberg character. As the trivial character is indexed by the transpose of $(n)$ (rather than by $(n)$ as with the symmetric group), we should expect our results to be transposed to some extent. The next proposition follows from the Pieri rules.

\begin{prop} Let $P_\lambda$ be the parabolic subgroup of $G$ of shape $\lambda$; then we have
\[
        \Ind_{P_\lambda}^G(\One) = \sum_{\mu' \succeq \lambda} K_{\mu'\lambda}\chi^\mu.
\]
\end{prop}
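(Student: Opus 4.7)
The plan is to apply Green's characteristic map from complex unipotent class functions of $\GL_n(\FF_q)$ to the ring of symmetric functions, together with Pieri's rule. This parallels the symmetric-group computation described in the preceding subsection.

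First, I would observe that $\Ind_{P_\lambda}^G(\One)$ coincides with the Harish-Chandra induction of the trivial character from the Levi subgroup $L_\lambda \cong \GL_{\lambda_1}(\FF_q) \times \cdots \times \GL_{\lambda_k}(\FF_q)$ of $P_\lambda$, because the unipotent radical $U_\lambda$ acts trivially in the induced module. Under the characteristic map, Harish-Chandra induction from such a Levi becomes multiplication in the symmetric function ring, and the trivial character of $\GL_m(\FF_q)$ (the unipotent character indexed by $(1^m)$ in this paper's convention) corresponds to the complete homogeneous symmetric function $h_m$. Consequently $\Ind_{P_\lambda}^G(\One)$ is carried to $h_\lambda = h_{\lambda_1} h_{\lambda_2} \cdots h_{\lambda_k}$.

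Next, I would apply the classical Kostka expansion $h_\lambda = \sum_{\mu} K_{\mu\lambda}\, s_\mu$ (see \cite[I.6]{MR1354144}), in which $K_{\mu\lambda}$ vanishes unless $\mu \succeq \lambda$. Finally I translate back to characters: under the labelling convention of this paper (matching Green \cite{MR0072878} and MacDonald \cite{MR1354144}, with $\chi^{(1^n)}$ trivial and $\chi^{(n)}$ Steinberg), the characteristic map sends $\chi^\mu$ to the Schur function $s_{\mu'}$. Reindexing $\mu \mapsto \mu'$ in the Pieri expansion then yields
\[
\Ind_{P_\lambda}^G(\One) = \sum_{\mu' \succeq \lambda} K_{\mu' \lambda}\, \chi^\mu,
\]
as required.

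The one delicate point, and the only real obstacle, is keeping the transpose convention straight: one must verify that under the paper's indexing $\chi^\mu$ truly corresponds to $s_{\mu'}$, since this is precisely what produces the transposed dominance condition $\mu' \succeq \lambda$ and the Kostka coefficient $K_{\mu'\lambda}$ in the sum (rather than $\mu \succeq \lambda$ and $K_{\mu\lambda}$ as in the $S_n$ analogue stated in the previous proposition). Modulo this bookkeeping, the proposition is an immediate consequence of the Harish-Chandra interpretation of $\Ind_{P_\lambda}^G(\One)$ together with the Pieri rule, with no further computation required.
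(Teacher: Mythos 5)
Your argument is correct and is essentially the paper's own approach: the paper simply asserts that the proposition ``follows from the Pieri rules,'' and your characteristic-map computation ($\Ind_{P_\lambda}^G(\One)$ corresponds to a product of one-row Schur functions, expanded via the Kostka coefficients) is exactly the standard fleshing-out of that citation. The only caveat is the bookkeeping point you yourself flag: whether Green's map sends the trivial character of $\GL_m(\FF_q)$ to $h_m$ (so $\chi^\mu \mapsto s_{\mu'}$) or to $e_m$ (so $\chi^\mu \mapsto s_\mu$) is immaterial, since the two dictionaries differ by the involution $\omega$ and either one, combined with the paper's normalization $\chi^{(1^n)} = \One$, yields the stated expansion.
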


Both James' construction and our construction use $\Ind_{P_{\lambda'}}^G(\One)$ as the analogue of $\Ind_{W_\lambda}^{S_n}(\One)$.

\bigbreak

Let $\Psi^\lambda$ be the \emph{degenerate Gelfand--Graev character} corresponding to the partition $\lambda$, as in \cite{MR643482}. The character $\Psi^\lambda$ is obtained by inducing a linear character of $UT_n(\FF_q)$ that is trivial on certain root subgroups determined by the partition $\lambda$.

\begin{prop}[{\cite{MR643482}}] If $\lambda$ and $\mu$ are partitions of $n$, we have that
\[
       \langle \Psi^\lambda, \chi^\mu \rangle = K_{\mu \lambda}. 
\]
\end{prop}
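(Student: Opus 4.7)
The plan is to translate the inner product into a Hall inner product on the ring of symmetric functions via the Green--Zelevinsky characteristic map, where it reduces to the classical identity $\langle h_\lambda, s_\mu\rangle = K_{\mu\lambda}$. Let $R^{\textup{un}} = \bigoplus_{n\geq 0} R^{\textup{un}}_n$ denote the graded $\ZZ$-module spanned by unipotent virtual characters of $\GL_n(\FF_q)$ (as $n$ varies), equipped with the multiplication given by parabolic (Harish--Chandra) induction from standard Levi subgroups. Green's theorem supplies an isometric ring isomorphism $\ch \colon R^{\textup{un}} \xrightarrow{\sim} \Lambda$ onto the ring of symmetric functions under which the irreducible unipotent character $\chi^\mu$ corresponds to the Schur function $s_\mu$ and the character-theoretic inner product corresponds to the Hall inner product.

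The main step is to verify $\ch(\Psi^\lambda) = h_\lambda$. Writing $\lambda = (\lambda_1,\ldots,\lambda_k)$, Zelevinsky's linear character $\psi_\lambda$ on $\UT_n(\FF_q)$ is nondegenerate on the superdiagonal entries lying \emph{within} each diagonal block of sizes $\lambda_1,\ldots,\lambda_k$ and trivial across block boundaries. Using transitivity of induction through the parabolic $P_\lambda$ with Levi $L_\lambda = \GL_{\lambda_1}(\FF_q)\times\cdots\times\GL_{\lambda_k}(\FF_q)$, one shows that $\Psi^\lambda$ equals the parabolic induction from $L_\lambda$ of the external tensor product $\Gamma_{\lambda_1}\boxtimes\cdots\boxtimes\Gamma_{\lambda_k}$, where $\Gamma_m$ is the ordinary Gelfand--Graev character of $\GL_m(\FF_q)$. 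On the unipotent part it is classical that $\Gamma_m$ equals the Steinberg character $\chi^{(m)}$, as this is the only unipotent character of $\GL_m(\FF_q)$ admitting a Whittaker model; hence $\ch(\Gamma_m) = s_{(m)} = h_m$ on unipotent components. Since parabolic induction corresponds to multiplication under $\ch$, we conclude
\[
\ch(\Psi^\lambda) \;=\; \prod_{i=1}^{k} h_{\lambda_i} \;=\; h_\lambda.
\]

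Combining the isometry property of $\ch$ with the classical expansion $h_\lambda = \sum_{\mu} K_{\mu\lambda} s_\mu$ (see \cite[I.6.4]{MR1354144}) yields
\[
\langle \Psi^\lambda, \chi^\mu \rangle \;=\; \langle h_\lambda, s_\mu \rangle \;=\; K_{\mu\lambda}.
\]
The principal obstacle is the factorization of $\Psi^\lambda$ through the Levi $L_\lambda$: one must run a Mackey-type argument to lift the induction from the small subgroup $\UT_n(\FF_q)$ up through the parabolic $P_\lambda$ and then recognize the block-decomposed character as an external tensor product of ordinary Gelfand--Graev characters. Once this structural identity and the classical identification of the unipotent part of $\Gamma_m$ with the Steinberg character are in hand, the remainder is a routine computation in $\Lambda$.
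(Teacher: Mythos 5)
Your argument is correct, but note that the paper offers no proof of this statement at all: it is quoted from Zelevinsky \cite{MR643482}, so there is nothing internal to compare against. Your route is essentially the standard (and essentially Zelevinsky's own) one: identify $\Psi^\lambda$ with the Harish--Chandra induction $R_{L_\lambda}^G(\Gamma_{\lambda_1}\boxtimes\cdots\boxtimes\Gamma_{\lambda_k})$, pass through the characteristic map where the unipotent part of $\Gamma_m$ is the Steinberg character $\chi^{(m)}\mapsto s_{(m)}=h_m$, and read off $\langle h_\lambda,s_\mu\rangle=K_{\mu\lambda}$; the conventions match the paper ($\chi^{(1^n)}$ trivial, $\chi^{(n)}$ Steinberg, so $\Ind_{P_\lambda}^G(\One)\mapsto e_\lambda$ and $\Psi^{(n)}$ contains only Steinberg among unipotents, consistent with $K_{\mu(n)}=\delta_{\mu(n)}$). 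Two small remarks. First, the ``principal obstacle'' you flag does not require a Mackey argument: since $\psi_\lambda$ is trivial on the unipotent radical $U_{P_\lambda}$ and $\UT_n(\FF_q)$ is exactly the preimage of $\UT_{\lambda_1}\times\cdots\times\UT_{\lambda_k}$ under $P_\lambda\twoheadrightarrow L_\lambda$, the identity $\Ind_{\UT_n}^G(\psi_\lambda)=R_{L_\lambda}^G(\Gamma_{\lambda_1}\boxtimes\cdots\boxtimes\Gamma_{\lambda_k})$ follows from transitivity of induction together with the compatibility of inflation and induction (induction of the inflated character from the full preimage equals inflation of the induced character), which is weaker and cleaner than Mackey. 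Second, you should make explicit two facts you use silently: that the Steinberg character occurs in $\Gamma_m$ with multiplicity exactly one (this is the multiplicity-freeness of the Gelfand--Graev representation, not just the existence/uniqueness of a Whittaker model among unipotent characters), and that taking ``unipotent parts'' commutes with parabolic induction, i.e.\ $\langle R_L^G(\sigma_1\boxtimes\sigma_2),\chi^\mu\rangle$ depends only on the unipotent parts of the $\sigma_i$; the latter is immediate from the tensor-product decomposition of Green's ring over cuspidal orbits, under which the unipotent factor is a direct tensor factor and the Hall pairing is orthogonal across factors. With those two points spelled out, the computation in $\Lambda$ is complete and the proposal stands as a valid proof of the quoted result.
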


It follows that $\langle \Ind_{P_{\lambda'}}^G(\One),\Psi^\lambda \rangle = 1$; the construction of James \cite{MR776229} uses $\Psi^\lambda$ as the analogue of $\Ind_{W_{\lambda'}}^{S_n}(\epsilon)$.

\bigbreak

In \cite{MR803335}, Kawanaka constructs the \emph{generalized Gelfand--Graev characters} of a reductive group over a finite field, with each character associated to a nilpotent orbit of the corresponding Lie algebra. In the case of $\GL_n(\FF_q)$, the nilpotent orbits are indexed by the partitions of $n$; let $\Gamma^\lambda$ be the generalized Gelfand--Graev character indexed by the partition $\lambda$. In \cite{andrewsthiem}, Thiem and the author show that $\Gamma^\lambda$ can be obtained by inducing a linear character from $U_{\lambda'}$, and calculate the multiplicities of the unipotent characters in $\Gamma^\lambda$.

\begin{thm}[{\cite{andrewsthiem}}]
Let $\lambda$ and $\mu$ be partitions of $n$; then
\[
        \langle \Gamma^\lambda, \chi^\mu \rangle = K_{\mu\lambda}(q),
\]
where $K_{\mu\lambda}(q)$ is the Kostka polynomial (see \cite[III.6]{MR1354144}). 
\end{thm}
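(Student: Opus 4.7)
The plan is to reduce the computation of $\langle \Gamma^\lambda, \chi^\mu\rangle$ to a symmetric-function identity by means of Green's characteristic map. First I would invoke the result alluded to in the paragraph preceding the theorem: that $\Gamma^\lambda$ is obtained as $\Ind_{U_{\lambda'}}^G \psi$ for an explicit linear character $\psi$. This reformulation is critical because it replaces the abstract definition of $\Gamma^\lambda$ via a nilpotent orbit and a Jacobson--Morozov grading with a concrete Harish-Chandra-style induction. Frobenius reciprocity then gives
\[
\langle \Gamma^\lambda, \chi^\mu\rangle = \langle \psi, \Res^G_{U_{\lambda'}} \chi^\mu\rangle_{U_{\lambda'}},
\]
so the task becomes understanding how the unipotent character $\chi^\mu$ decomposes on the $\psi$-isotypic component of the unipotent radical $U_{\lambda'}$.

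Next I would transfer the problem to the ring $\Lambda$ of symmetric functions via Green's characteristic map $\ch$. Under $\ch$ the unipotent character $\chi^\mu$ corresponds to the Schur function $s_\mu$, and the inner product on unipotent class functions of $G$ corresponds to the ordinary Hall inner product. The technical heart of the argument is to identify $\ch(\Gamma^\lambda)$ with a suitably normalized Hall--Littlewood symmetric function $\tilde Q_\lambda(x;q)$, with the precise normalization dictated by centralizer sizes inside $U_{\lambda'}$. Granting this, the theorem follows at once from the defining relation
\[
s_\mu = \sum_\lambda K_{\mu\lambda}(q)\, P_\lambda(x;q)
\]
of the Kostka--Foulkes polynomials together with the duality $\langle P_\lambda, \tilde Q_\nu\rangle = \delta_{\lambda\nu}$ in the ordinary Hall inner product.

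The main obstacle, and where the $q$-deformation actually enters, is the identification $\ch(\Gamma^\lambda) = \tilde Q_\lambda(x;q)$. I would attack this by computing the values of $\Gamma^\lambda$ on the regular semisimple classes (parametrized by monic polynomials of degree $n$) and then extending to arbitrary Jordan-type classes, in each case using the Mackey decomposition to unfold the induction from $U_{\lambda'}$. Matching the resulting character values against the expansion of $\tilde Q_\lambda$ in Green's basis of class functions (or equivalently in the power-sum basis of $\Lambda$) recovers $K_{\mu\lambda}(q)$; the $q$-powers which promote the Kostka number $K_{\mu\lambda}$ to a polynomial emerge from counting fixed points of the $G$-action on $U_{\lambda'}/\ker(\psi)$, a count in which the combinatorics of Jordan blocks of shape $\lambda$ inside the parabolic $P_{\lambda'}$ produces exactly the $q$-enumeration of charge-statistic type that underlies $K_{\mu\lambda}(q)$. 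This last step, verifying the character formula for $\Gamma^\lambda$ carefully enough to see the charge statistic, is where I expect the majority of the bookkeeping (and thus the essential difficulty) to lie.
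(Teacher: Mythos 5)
First, note that this theorem is not proved in the paper at all: it is imported verbatim from \cite{andrewsthiem}, so there is no internal argument to compare against, and any proof you give must actually carry the full weight of that cited result. Your skeleton is the right one in spirit --- write $\Gamma^\lambda = \Ind_{U_{\lambda'}}^G(\psi)$, apply Frobenius reciprocity, pass through Green's characteristic map, and finish with the duality between $P_\lambda(x;q)$ and the modified Hall--Littlewood basis under the Hall inner product --- but as written the proposal defers precisely the step that constitutes the theorem. Identifying the unipotent projection of $\ch(\Gamma^\lambda)$ with a (suitably normalized) modified Hall--Littlewood function is not a routine verification one can wave toward: it is equivalent to the statement being proved, and your sketch of how to get it has a concrete flaw. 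Since $\Gamma^\lambda$ is induced from the unipotent subgroup $U_{\lambda'}$, its character vanishes on every class that does not meet $U_{\lambda'}$; in particular it is identically zero on all regular semisimple classes, so ``computing the values of $\Gamma^\lambda$ on the regular semisimple classes and then extending'' yields no information. All of the content sits on the unipotent classes: one must evaluate, for each unipotent class of Jordan type $\rho$, the exponential sum of $\psi$ over the intersection of that class with $U_{\lambda'}$, and then pair against the Green polynomial values $\chi^\mu(u_\rho)$. That is exactly the hard combinatorial computation carried out in \cite{andrewsthiem}, and ``counting fixed points of the $G$-action on $U_{\lambda'}/\ker(\psi)$, from which the charge statistic emerges'' names no mechanism by which charge actually appears.

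Two further points need care even granting the key identification. The characteristic map is an isometry for a deformed inner product on the full ring of symmetric functions in several families of variables; it is only on the unipotent span (the Schur functions in the variables attached to the trivial character) that it restricts to the ordinary Hall inner product, so you must justify projecting $\Gamma^\lambda$, which has many non-unipotent constituents, before invoking orthonormality of the $s_\mu$. And the normalization matters: the dual basis to $P_\lambda(x;q)$ under the ordinary Hall inner product is $Q'_\lambda(x;q) = \sum_\mu K_{\mu\lambda}(q) s_\mu$, whereas the cocharge-modified version $\sum_\mu q^{n(\lambda)}K_{\mu\lambda}(q^{-1}) s_\mu$ would produce $\tilde{K}_{\mu\lambda}(q)$ rather than $K_{\mu\lambda}(q)$; deciding which one $\ch(\Gamma^\lambda)$ actually is again requires the character-value computation you have postponed. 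So the proposal is a reasonable plan whose reduction steps are sound, but it does not contain a proof of the theorem.
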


It follows that $\langle \Ind_{P_{\lambda'}}^G(\One),\Gamma^\lambda \rangle = 1$. Furthermore, note that $K_{\mu\lambda}(1) = K_{\mu\lambda}$; in other words, by setting $q=1$ we see that $\Gamma^\lambda$ is another analogue of $\Ind_{W_{\lambda'}}^{S_n}(\epsilon)$. The following proposition follows directly from our construction and the construction of $\Gamma^\lambda$ in \cite{andrewsthiem}, and connects our modules $S^\lambda$ to the generalized Gelfand--Graev characters.

\begin{prop} Let $T$ be any $\FF_q^n$-tableau of shape $\lambda$; then
\[
		\text{Ind}_{U(T)}^G(\psi_T) = \Gamma^\lambda.
\]
\end{prop}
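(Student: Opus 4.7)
The plan is to reduce to a convenient representative $T$ using the $G$-equivariance of the data $(U(T),\psi_T)$, and then identify $\psi_T$ with the linear character that \cite{andrewsthiem} uses to realize $\Gamma^\lambda$ as an induced character.

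First I would observe that $\Ind_{U(T)}^G(\psi_T)$ does not depend on the choice of tableau $T$ of shape $\lambda$. Indeed, any two $\FF_q^n$-tableaux of shape $\lambda$ are related by the $G$-action, and if $T' = gT$, parts (1) and (3) of Lemma~\ref{properties} give $U(T') = g U(T)g^{-1}$ and $\psi_{T'}(gug^{-1}) = \psi_T(u)$, so the inducing data on the two subgroups are conjugate and yield the same induced character. We may therefore specialize to the tableau $T_0$ whose entries are the standard basis vectors of $\FF_q^n$ listed top-to-bottom then left-to-right, in which case $U(T_0) = U_{\lambda'}$ by the remark that follows the definitions of $U(T)$ and $P(T)$.

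Next I would unpack $\psi_{T_0}$ explicitly. By construction, the pairs in $X(T_0)$ are exactly those $(i,j)$ for which the $i$th and $j$th standard basis vectors occupy horizontally adjacent boxes of the Young diagram of $\lambda$ under this reading order, so
\[
  \psi_{T_0}(u) \;=\; \theta\Bigl(\sum_{(i,j)\in X(T_0)} u_{ij}\Bigr),
\]
a linear character of $U_{\lambda'}$ given as $\theta$ composed with a sum of specific off-diagonal entries. In \cite{andrewsthiem}, $\Gamma^\lambda$ is shown to equal $\Ind_{U_{\lambda'}}^G$ applied to precisely this linear character of $U_{\lambda'}$; matching the two presentations gives the claim.

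The main (and essentially only) obstacle is the combinatorial bookkeeping in the matching step: one must verify that the positions recorded by $X(T_0)$ coincide with the distinguished set of off-diagonal coordinates used in \cite{andrewsthiem} to define the linear character associated with $\lambda'$. This is routine, since both sets are read off from horizontal adjacencies in the $\lambda$-diagram under the same top-to-bottom, left-to-right convention, but it does require keeping the transpose $\lambda \leftrightarrow \lambda'$ straight when translating between the shape of the tableau and the block structure of $U_{\lambda'}$.
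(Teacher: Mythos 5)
Your argument is correct and is exactly the intended one: the paper offers no written proof, stating only that the proposition ``follows directly from our construction and the construction of $\Gamma^\lambda$ in \cite{andrewsthiem},'' and your reduction to the standard-basis tableau via Lemma~\ref{properties}(1),(3) followed by matching $\psi_{T_0}$ with the inducing linear character on $U_{\lambda'}$ from \cite{andrewsthiem} is precisely that unpacking. No gap beyond the bookkeeping you already flag.
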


We can now show that our modules $S^\lambda$ afford the characters $\chi^\lambda$.

\begin{thm} Let $\chi_{S^\lambda}$ be the character afforded by $S^\lambda$; then $\chi_{S^\lambda} =  \chi^{\lambda}$.
\end{thm}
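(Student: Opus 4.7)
The plan is to exploit the observation that $\chi^\lambda$ is the unique irreducible unipotent character appearing as a constituent of both $\Ind_{P_{\lambda'}}^G(\One)$ and $\Gamma^\lambda$, and to show that $\chi_{S^\lambda}$ is a constituent of each.

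First, by the corollary following Proposition~\ref{indecomposable}, $S^\lambda$ is irreducible over $\CC$, so $\chi_{S^\lambda}$ is a single irreducible character of $G$. Since $S^\lambda$ is a submodule of $M^\lambda \cong \Ind_{P_{\lambda'}}^G(\One)$, the Pieri-rules proposition (together with the fact that conjugation reverses the dominance order) implies that $\chi_{S^\lambda} = \chi^\mu$ for some partition $\mu$ satisfying $\mu \preceq \lambda$.

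Next I would show that $\chi_{S^\lambda}$ is also a constituent of $\Gamma^\lambda$. Fix any $\FF_q^n$-tableau $T$ of shape $\lambda$; by parts (2) and (5) of Lemma~\ref{properties}, each $u \in U(T)$ satisfies
\[
        u \cdot e_T = e_{uT} = \psi_T(u)\, e_T,
\]
so the line $\CC e_T \subseteq S^\lambda$ carries a copy of $\psi_T$ inside $\Res_{U(T)}^G S^\lambda$. Frobenius reciprocity, combined with the preceding proposition $\Ind_{U(T)}^G(\psi_T) = \Gamma^\lambda$, then gives
\[
        \langle \chi_{S^\lambda}, \Gamma^\lambda \rangle = \langle \Res^G_{U(T)} \chi_{S^\lambda}, \psi_T \rangle \geq 1.
\]
The theorem of Thiem and the author, which says $\langle \Gamma^\lambda, \chi^\mu \rangle = K_{\mu\lambda}(q)$ and hence vanishes unless $\mu \succeq \lambda$, now forces $\mu \succeq \lambda$.

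Combining $\mu \preceq \lambda$ with $\mu \succeq \lambda$ yields $\mu = \lambda$, so $\chi_{S^\lambda} = \chi^\lambda$. I do not anticipate a substantive obstacle: the ingredients (irreducibility of $S^\lambda$ over $\CC$, the Pieri expansion of $\Ind_{P_{\lambda'}}^G(\One)$, the Kostka-polynomial expansion of $\Gamma^\lambda$, and the $\psi_T$-eigenvector property of $e_T$ built into the construction) are all in place, and the argument amounts to assembling them via Frobenius reciprocity.
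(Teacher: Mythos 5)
Your proposal is correct and follows essentially the same route as the paper: both arguments show $\chi_{S^\lambda}$ is a constituent of $\Ind_{P_{\lambda'}}^G(\One)$ via $S^\lambda \subseteq M^\lambda$, and of $\Gamma^\lambda$ via the $\psi_T$-eigenline $\CC e_T$ and Frobenius reciprocity, then conclude from the fact that $\chi^\lambda$ is the unique common constituent. Your only variation is to spell out that uniqueness as the pair of dominance inequalities $\mu \preceq \lambda$ and $\mu \succeq \lambda$ coming from the Pieri expansion and the Kostka-polynomial expansion, which the paper leaves implicit.
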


\begin{proof} As $S^\lambda$ is a submodule of $M^\lambda$, we have that
\[
		\langle \chi_{S^\lambda}, \text{Ind}_{P(T)}^G(\One) \rangle = \langle \chi_{S^\lambda}, \text{Ind}_{P_{\lambda'}}^G(\One) \rangle \neq 0.
\]
At the same time, $\mathbb{C}e_T$ is a $U(T)$-module that affords the character $\psi_T$. This means that
\[
		\langle \chi_{S^\lambda}, \Gamma^\lambda \rangle = \langle \chi_{S^\lambda}, \text{Ind}_{U(T)}^G(\psi_T) \rangle = \langle \text{Res}_{U(T)}^G(\chi_{S^\lambda}), \psi_T \rangle \neq 0.
\]
As $\chi^{\lambda}$ is the only common irreducible constituent of $\text{Ind}_{P_{\lambda'}}^G(\One)$ and $\Gamma^\lambda$, we must have $\chi_{S^\lambda} =  \chi^{\lambda}$.
\end{proof}

Finally, we note that we can use our results to identify an element of $\CC G$ that generates a module affording $\chi^\lambda$.

\begin{cor} Let $\lambda$ be a partition of $n$, and let $\varphi_\lambda$ be the linear character of $U_{\lambda'}$ that induces to $\Gamma^\lambda$. Then if
\[
        e = \sum_{\substack{u \in U_{\lambda'} \\ p \in P_{\lambda'}^-}} \varphi_\lambda(u^{-1})up,
\]
we have that $\mathbb{C}Ge$ is an irreducible $G$-module that affords the character $\chi^{\lambda}$.
\end{cor}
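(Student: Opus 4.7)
The plan is to identify the element $e \in \CC G$ with $e_T \in S^\lambda$ for $T$ the $\FF_q^n$-tableau whose entries are the standard ordered basis of $\FF_q^n$, and then deduce the corollary from the preceding theorem (identifying the character of $S^\lambda$) together with the earlier irreducibility corollary for $S^\lambda$ in characteristic zero.

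First I would observe that $G$ acts simply transitively on the set of $\FF_q^n$-tableaux of shape $\lambda$: such a tableau records an ordered basis of $\FF_q^n$, and $G$ permutes ordered bases freely and transitively. Sending $g \mapsto gT$ therefore gives a $\CC G$-linear isomorphism $\phi$ from the left regular module $\CC G$ onto the permutation module of $\FF_q^n$-tableaux of shape $\lambda$. For the standard tableau $T$ we have $U(T) = U_{\lambda'}$ and $P(T) = P_{\lambda'}^-$, and $\psi_T = \varphi_\lambda$, since both are linear characters of $U_{\lambda'}$ inducing to $\Gamma^\lambda$ (by the proposition immediately preceding the corollary). Consequently
\[
\phi(e) = \sum_{\substack{u \in U_{\lambda'} \\ p \in P_{\lambda'}^-}} \varphi_\lambda(u^{-1})\, up T = \Bigl(\sum_{u \in U(T)} \psi_T(u^{-1})\, u\Bigr)\Bigl(\sum_{p \in P(T)} p T\Bigr) = k_T m_T = e_T,
\]
so that $\phi(\CC G e) = \CC G e_T$.

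Next I would verify that $\CC G e_T = S^\lambda$. By Lemma~\ref{properties}(2), $g \cdot e_T = e_{gT}$, and since $G$ acts transitively on $\FF_q^n$-tableaux of shape $\lambda$ the orbit $G \cdot e_T$ already includes every spanning vector of $S^\lambda$; hence $\CC G e_T = S^\lambda$. Combining the theorem that $S^\lambda$ affords the character $\chi^\lambda$ with the corollary (following Proposition~\ref{indecomposable}) that $S^\lambda$ is irreducible whenever $\text{char}(\KK)$ does not divide $|G|$, we conclude that $\CC G e \cong S^\lambda$ is an irreducible $G$-module affording $\chi^\lambda$.

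No serious obstacle is expected: the argument is essentially bookkeeping once the identification between the tableau permutation module and the regular representation is made. The only mild subtlety is confirming the equality $\psi_T = \varphi_\lambda$ for the standard $T$, which follows directly from the preceding proposition describing $\text{Ind}_{U(T)}^G(\psi_T) = \Gamma^\lambda$.
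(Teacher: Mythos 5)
Your argument is correct and is exactly the (implicit) deduction the paper intends: the corollary is stated without proof, and your identification of the tableau permutation module with the regular module, of $e$ with $e_T$ for the standard-basis tableau (where $U(T)=U_{\lambda'}$, $P(T)=P_{\lambda'}^-$, $\psi_T=\varphi_\lambda$), and of $\CC G e_T$ with $S^\lambda$ via transitivity is precisely the bookkeeping needed to invoke the theorem that $S^\lambda$ affords $\chi^\lambda$ and the characteristic-zero irreducibility corollary. The only wording to tighten is the claim that $\psi_T=\varphi_\lambda$ ``since both induce to $\Gamma^\lambda$'' --- inducing to the same character does not by itself force equality; rather, $\varphi_\lambda$ should be read as the specific linear character of $U_{\lambda'}$ from the cited construction, which is $\psi_T$ for the standard tableau by the preceding proposition.
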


\bibliography{bibfile}
	\bibliographystyle{plain}

\end{document}